\renewcommand{\qed}{\hfill\small{$\square$}\normalsize}
\theoremstyle{definition}
\newtheorem{lemma}{Lemma}[section]
\newtheorem{theorem}[lemma]{Theorem}
\newtheorem{remark}{Remark}
\numberwithin{equation}{section}
\renewcommand{\qed}{\hfill\small{$\square$}\normalsize}
\DeclareFixedFont{\Acknowledgment}{OT1}{cmr}{bx}{n}{14pt}
\begin{document}

\title{\bf $p$-th Kazdan-Warner equation on graph}
\author{Huabin Ge}
\maketitle

\begin{abstract}
Let $G=(V,E)$ be a connected finite graph and $C(V)$ be the set of functions defined on $V$. Let $\Delta_p$ be the discrete $p$-Laplacian on $G$ with $p>1$ and $L=\Delta_p-k$, where $k\in C(V)$ is positive everywhere. Consider the operator $L:C(V)\rightarrow C(V)$. We prove that $-L$ is one to one, onto and preserves order. So it implies that there exists a unique solution to the equation $Lu=f$ for any given $f\in C(V)$. We also prove that the equation $\Delta_pu=\overline{f}-f$ has a solution which is unique up to a constant, where $\overline{f}$ is the average of $f$. With the help of these results, we finally give various conditions such that the $p$-th Kazdan-Warner equation
$\Delta_pu=c-he^u$ has a solution on $V$ for given $h\in C(V)$ and $c\in \mathds{R}$. Thus we generalize Grigor'yan, Lin and Yang's work \cite{GLY} for $p=2$ to any $p>1$.

\end{abstract}


\section{Introduction}\label{Introduction}
Let $\widetilde{g}=e^{2\varphi}g$ be a conformal deformation of a smooth metric $g$ on a $2$-dimensional compact Riemannian manifold $M$ without boundary. To find a smooth function $\widetilde{K}$ as the Gaussian curvature of $\widetilde{g}$, one needs to solving the nonlinear elliptic equation
\begin{equation}\label{equ-smooth-original}
\Delta_g\varphi=K-\widetilde{K}e^{2\varphi}.
\end{equation}
By a parameter transformation of $\varphi$ to $u$, the equation (\ref{equ-smooth-original}) takes the following form
\begin{equation}\label{equ-smooth-simplify}
\Delta_gu=c-he^{u},
\end{equation}
where $c$ is a constant, and $h$ is some prescribed function, with neither $c$ nor $h$ depends on the geometry of $(M,g)$. Kazdan and Warner \cite{KW} had given satisfying characterizations to the solvability of the equation (\ref{equ-smooth-simplify}).

Grigor'yan, Lin and Yang \cite{GLY} first studied the corresponding finite graph version of the Kazdan-Warner equation (\ref{equ-smooth-simplify}), that is, $\Delta u=c-he^{u}$, where $\Delta$ is a discrete graph Laplacian, $c\in\mathds{R}$, and $h$ is a function defined on the vertices.

The smooth Laplace-Beltrami operator $\Delta:C^{\infty}\rightarrow C^{\infty}$ is defined as $\Delta f=-\mathrm{div}(\nabla f)$. Those $f$ with $\Delta f=0$ (called harmonic function) are critical points of the Dirichlet energy $\int_M|\nabla f|^2dv_g$. For any $p>1$, one can define a smooth $p$-Laplacian $\Delta_p:C^{\infty}\rightarrow C^{\infty}$ as
$$\Delta_pf=-\mathrm{div}(|\nabla f|^{p-2}\nabla f).$$
Those $f$ with $\Delta_p f=0$ (often called $p$-harmonic function) are critical points of the $p$-th Dirichlet energy $\int_M|\nabla f|^pdv_g$. Similarly, one can define a $p$-th discrete Laplacian $\Delta_p$ on a graph. In this paper, we study the following equation on a finite graph
\begin{equation*}
\Delta_p u=c-he^{u},
\end{equation*}
which is called the $p$-th Kazdan-Warner equation on graph in the paper.

We shall generalize Grigor'yan, Lin and Yang's results \cite{GLY} for $p=2$ to any $p>1$. The difficulties for the generalizations mainly come from the fact that $\Delta_p$ is a nonlinear operator when $p\neq2$. We first prove some very important theorems (Theorem \ref{thm-L}-\ref{thm-bar-f-equation}) related to the discrete $p$-Laplacian $\Delta_p$. With the help of these theorems, we give satisfying characterizations (Theorem \ref{thm-c=0}-\ref{thm-c<0-h<0}) to the solvability of the $p$-th Kazdan-Warner equation on a finite graph. The main theorems in the paper are proved using variational principles and the method of upper and lower solutions. The main idea of the paper comes from reading of Grigor'yan, Lin and Yang \cite{GLY}, Kazdan and Warner \cite{KW}. We follow the approach pioneered by Kazdan and Warner \cite{KW}.

The paper is organized as follows. In section \ref{sect-main-result}, we state the main theorems of the paper. In section \ref{sect-preliminary-lemma}, we give some useful lemmas, namely, a Liouville-type theorem, the Sobolev embedding and the Trudinger-Moser embedding. In section \ref{sect-prof-thm-L}-\ref{sect-prof-thm-c<0-h<0}, we prove Theorem \ref{thm-L}-\ref{thm-c<0-h<0} respectively.

\section{Settings and main results}
Let $G=(V,E)$ be a finite graph, where $V$ denotes the vertex set and $E$ denotes the edge set. Fix a vertex measure $\mu:V\rightarrow(0,+\infty)$ and an edge measure $\omega:E\rightarrow(0,+\infty)$ on $G$. The edge measure $\omega$ is assumed to be symmetric, that is, $\omega_{ij}=\omega_{ji}$ for each edge $i\thicksim j$.

Denote $C(V)$ as the set of all real functions defined on $V$, then $C(V)$ is a finite dimensional linear space with the usual function additions and scalar multiplications. For any $p>1$, the $p$-th discrete graph Laplacian $\Delta_p:C(V)\rightarrow C(V)$ is
\begin{equation*}
\Delta_pf_i=\frac{1}{\mu_i}\sum\limits_{j\thicksim i}\omega_{ij}|f_j-f_i|^{p-2}(f_j-f_i)
\end{equation*}
for any $f\in C(V)$ and $i\in V$. If $p\neq2$, then $\Delta_p(f+g)\neq\Delta_pf+\Delta_pg$ for general $f,g\in C(V)$. For $\lambda\in \mathds{R}$, let $\mathrm{sgn}(\lambda)=1$, $0$ or $-1$ if $\lambda>0$, $=0$ or $<0$, then $\Delta_p(\lambda f)=\mathrm{sgn}(\lambda)|\lambda|^{p-1}\Delta_pf$.
For any $f\in C(V)$, denote $\overline{f}$ as the average value of $f$ with respect to the vertex measure $\mu$. Throughout this paper, all graphs are assumed to be connected.

The following two results are useful in the study of the $p$-th Kazdan-Warner equations on finite graphs. We state them as theorems since they are also important as themselves.

\label{sect-main-result}
\begin{theorem}\label{thm-L}
Let $G=(V,E)$ be a finite graph, and $k\in C(V)$ is positive everywhere. Consider the operator $L=\Delta_p-k:C(V)\rightarrow C(V)$. Then $L$ is one to one and onto. Moreover, $-L^{-1}$ preserves order, that is, if $-Lf\leq-Lg$, then $f\leq g$.
\end{theorem}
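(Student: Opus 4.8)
\emph{Overview and the comparison principle.} The plan is to split the statement into a comparison principle — which already contains injectivity — and surjectivity, and to treat the two parts by different means: the first by a discrete maximum principle, the second by a direct variational argument. For the first part, I would begin with the elementary remark that for $p>1$ the map $\phi(t)=|t|^{p-2}t$ is continuous and strictly increasing on $\mathds{R}$. Now suppose $-Lf\le -Lg$, that is $\Delta_p f-kf\ge \Delta_p g-kg$ on $V$, and assume for contradiction that $f\le g$ fails. Since $V$ is finite, $f-g$ attains a \emph{strictly positive} maximum at some vertex $i_0$. On one hand $f_{i_0}-g_{i_0}>0$, so $-k_{i_0}(f_{i_0}-g_{i_0})<0$ because $k_{i_0}>0$. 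On the other hand, for every neighbour $j\sim i_0$ maximality gives $f_j-g_j\le f_{i_0}-g_{i_0}$, hence $f_j-f_{i_0}\le g_j-g_{i_0}$ and therefore $\phi(f_j-f_{i_0})\le\phi(g_j-g_{i_0})$; multiplying by the positive weights $\omega_{i_0j}/\mu_{i_0}$ and summing yields $\Delta_p f_{i_0}\le\Delta_p g_{i_0}$. Adding the two inequalities gives $(\Delta_p f-kf)(i_0)<(\Delta_p g-kg)(i_0)$, contradicting $-Lf\le -Lg$. Hence $f\le g$, which is the claimed order preservation; interchanging $f$ and $g$ shows that $Lf=Lg$ forces $f=g$, so $L$ is one to one.

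\emph{Surjectivity by the variational method.} Fix $f\in C(V)$ and, identifying $C(V)$ with $\mathds{R}^{|V|}$, consider
\[
J(u)=\frac1p\sum_{i\sim j}\omega_{ij}|u_i-u_j|^p+\frac12\sum_{i\in V}\mu_i k_i u_i^2+\sum_{i\in V}\mu_i f_i u_i,
\]
with the edge sum taken once per edge. A summation-by-parts computation (using the symmetry of $\omega$ and the oddness of $\phi$) shows that $J\in C^1(\mathds{R}^{|V|})$ with $J'(u)[v]=\sum_{i\in V}\mu_i\bigl(-\Delta_p u+ku+f\bigr)_i v_i$, so the critical points of $J$ are exactly the solutions of $\Delta_p u-ku=f$, i.e. of $Lu=f$. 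It then remains only to produce a critical point. The first two terms of $J$ are nonnegative, and since $k>0$ everywhere we have $\tfrac12\sum_i\mu_i k_i u_i^2\ge c_0\|u\|_2^2$ for some $c_0>0$, while the linear term is bounded below by $-C\|u\|_2$; hence $J(u)\to+\infty$ as $\|u\|\to\infty$. A coercive $C^1$ function on a finite-dimensional space attains its infimum at a critical point, which solves $Lu=f$; thus $L$ is onto. (One may add that $J$ is convex, strictly so because of the quadratic term, which re-derives uniqueness.) Combining the two parts, $L$ is a bijection and, for all $f,g$, $-Lf\le -Lg$ implies $f\le g$, i.e. $-L^{-1}$ preserves order.

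\emph{Main obstacle.} The comparison principle is a routine discrete maximum principle; the only points that need care are the monotonicity of $\phi$ and keeping track of signs. The substantive difficulty is surjectivity, because for $p\ne2$ the operator $\Delta_p$ is nonlinear and the linear-algebra shortcut available at $p=2$ is lost. The variational argument circumvents this; what makes it work is that the positivity of $k$ gives coercivity of $J$ on its own, with no help from the $p$-Dirichlet term. If one preferred to avoid the functional $J$, a topological alternative is available: testing $-Lu$ against $u$ gives $\langle -Lu,u\rangle_\mu\ge c_0\|u\|_2^2$, so $-L$ is proper, and a proper injective continuous self-map of $\mathds{R}^{|V|}$ is onto by invariance of domain.
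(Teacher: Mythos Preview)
Your proof is correct, and on two of the three points it coincides with the paper: surjectivity via the coercive functional $J$ is exactly the paper's argument (there called $E$), and the comparison principle is proved the same way, by evaluating at a vertex where $f-g$ attains a positive maximum and invoking the monotonicity of $t\mapsto|t|^{p-2}t$. The one difference is injectivity. You obtain it for free as a corollary of the comparison principle ($Lf=Lg$ gives both $-Lf\le-Lg$ and $-Lg\le-Lf$, hence $f\le g$ and $g\le f$), which is the natural and economical route. The paper instead gives a separate, independent proof of injectivity \emph{before} the comparison principle, by showing directly that $\sum_i\mu_ik_i(f_i-g_i)^2=0$: it expands this sum using $\Delta_pf-\Delta_pg=k(f-g)$, and bounds the resulting cross terms $A(f,g)=\sum_{i\sim j}\omega_{ij}|f_j-f_i|^{p-2}(f_j-f_i)(g_j-g_i)$ via H\"older's and Young's inequalities. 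Your ordering of the argument renders that computation unnecessary.
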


\begin{theorem}\label{thm-bar-f-equation}
Let $G=(V,E)$ be a finite graph. For any given $f\in C(V)$, there exists a solution to the equation $\Delta_pu=\overline{f}-f$. Moreover, if
$u'$ is also a solution, then $u'$ differs from $u$ by a constant.
\end{theorem}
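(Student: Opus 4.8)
The plan is to obtain the solution by a variational argument on the space of mean-zero functions. Consider the functional
\[
J(u)=\frac{1}{p}\sum_{i\sim j}\omega_{ij}|u_j-u_i|^p+\sum_{i\in V}\mu_i\,(\overline f-f_i)\,u_i
\]
on $C(V)$, whose Euler–Lagrange equation is exactly $\Delta_p u=\overline f-f$ (the linear term is set up so that its gradient at vertex $i$ is $\mu_i(\overline f-f_i)$, matching the definition of $\Delta_p$; note $\sum_i\mu_i(\overline f-f_i)=0$, so the linear term is unchanged under $u\mapsto u+c$). Because $J$ is invariant under adding constants, I would restrict to the hyperplane $\mathcal H=\{u\in C(V):\overline u=0\}$ and minimize there. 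First I would check that $J$ is continuous and convex on the finite-dimensional space $\mathcal H$ (convexity of $t\mapsto|t|^p$ for $p>1$), so that any critical point is a global minimizer.

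The key step is coercivity of $J$ on $\mathcal H$. On a connected finite graph, the quantity $\big(\sum_{i\sim j}\omega_{ij}|u_j-u_i|^p\big)^{1/p}$ is a norm on $\mathcal H$ (it vanishes only on constants, which meet $\mathcal H$ only at $0$), hence equivalent to any other norm on the finite-dimensional space $\mathcal H$; this is essentially the Sobolev/Poincaré-type embedding available from the earlier lemmas. Thus the leading term dominates and controls $\|u\|_\infty$ up to constants, while the linear term grows at most linearly in $\|u\|_\infty$, so $J(u)\to+\infty$ as $\|u\|_{\mathcal H}\to\infty$. Coercivity plus continuity on a finite-dimensional space gives a minimizer $u_0\in\mathcal H$, and stationarity of $J$ at $u_0$ yields $\Delta_p u_0=\overline f-f$ — this computation of the gradient is routine but should be written out, using that $\tfrac{d}{dt}|t|^p=p|t|^{p-2}t$.

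For uniqueness up to a constant, suppose $u,u'$ both solve the equation. Subtracting and pairing against $u-u'$ (i.e. summing the equation $\Delta_p u-\Delta_p u'=0$ times $(u-u')$ over $V$ and applying the discrete integration-by-parts/Green's formula) gives
\[
\sum_{i\sim j}\omega_{ij}\big(|u_j-u_i|^{p-2}(u_j-u_i)-|u'_j-u'_i|^{p-2}(u'_j-u'_i)\big)\big((u_j-u_i)-(u'_j-u'_i)\big)=0 .
\]
The strict monotonicity of $t\mapsto|t|^{p-2}t$ forces $u_j-u_i=u'_j-u'_i$ for every edge $i\sim j$, and connectedness then forces $u-u'$ to be constant.

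I expect the main obstacle to be the coercivity estimate: one must argue carefully that the $p$-th Dirichlet energy is comparable to a genuine norm on $\mathcal H$ and is not swamped by the linear term. The monotonicity inequality used for uniqueness, while elementary, also deserves a clean statement; everything else (existence of a minimizer, the Euler–Lagrange computation, the discrete Green's formula) is standard in the finite-dimensional setting.
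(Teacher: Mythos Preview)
Your proposal is correct and follows essentially the same route as the paper: both minimize the same energy functional on the hyperplane of mean-zero functions, use a Poincar\'e-type estimate (which you phrase as norm equivalence on a finite-dimensional space) for coercivity, and recover the full Euler--Lagrange equation from the invariance of $J$ under addition of constants. Your uniqueness argument via strict monotonicity of $t\mapsto|t|^{p-2}t$ is slightly more direct than the paper's (which points back to a H\"older/Young computation), but the underlying idea is the same.
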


Rewrite the $p$-th Kazdan-Warner equation on the graph $G$ as follows:
\begin{equation}\label{def-p-KW-eq}
\Delta_pu=c-he^u \;\;\;\mathrm{in}\; V,
\end{equation}
where $c\in \mathds{R}$ is a constant, and $h\in C(V)$ is a function. We summarize various conditions such that (\ref{def-p-KW-eq}) has a solution as the following four theorems.\\

In case $c=0$, we have:
\begin{theorem}\label{thm-c=0}
Let $G=(V,E)$ be a finite graph. Consider the equation (\ref{def-p-KW-eq}) with $c=0$. Assume $h\not\equiv0$, then (\ref{def-p-KW-eq}) has a solution if and only if $h$ changes sign and $\overline{h}<0$.
\end{theorem}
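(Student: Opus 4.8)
The plan is to prove the two implications separately, the engine in both being the summation-by-parts identity
$\sum_i\mu_i\phi_i\Delta_p u_i=-\tfrac12\sum_{i\sim j}\omega_{ij}|u_j-u_i|^{p-2}(u_j-u_i)(\phi_j-\phi_i)$,
where $\sum_{i\sim j}$ runs over ordered adjacent pairs; note that $|u_j-u_i|^{p-2}(u_j-u_i)=|u_j-u_i|^{p-1}\mathrm{sgn}(u_j-u_i)$ is continuous and vanishes on edges where $u$ does not jump, so no singularity arises for $p<2$. For necessity, suppose $u$ solves $\Delta_pu=-he^u$. Taking $\phi\equiv1$ kills the left-hand side, so $\sum_i\mu_ih_ie^{u_i}=0$; since each $e^{u_i}>0$ and $h\not\equiv0$, $h$ must take both signs, and $u$ cannot be constant (else $\Delta_pu\equiv0$ and $h\equiv0$). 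Taking $\phi_i=e^{-u_i}$ gives $-\sum_i\mu_ie^{-u_i}\Delta_pu_i=\tfrac12\sum_{i\sim j}\omega_{ij}|u_j-u_i|^{p-2}(u_j-u_i)(e^{-u_j}-e^{-u_i})$, which is $\le0$ term by term because $t\mapsto e^{-t}$ is decreasing, and strictly negative since $u$ is non-constant and $G$ is connected; as the same quantity equals $-\sum_i\mu_ih_i=-|V|_\mu\overline h$ with $|V|_\mu:=\sum_i\mu_i$, we obtain $\overline h<0$.

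For sufficiency, assume $h$ changes sign and $\overline h<0$. I will minimize the $p$-Dirichlet energy $E(u)=\tfrac12\sum_{i\sim j}\omega_{ij}|u_j-u_i|^p$ over $B=\{u\in C(V):\Phi(u)=0\}$, where $\Phi(u)=\sum_i\mu_ih_ie^{u_i}$. The set $B$ is nonempty: $\Phi(0)=|V|_\mu\overline h<0$, while $\Phi(t\,\mathbf{1}_{i_0})\to+\infty$ as $t\to+\infty$ for any vertex $i_0$ with $h_{i_0}>0$ (here $\mathbf{1}_{i_0}$ is the indicator of $i_0$), so $\Phi$ vanishes somewhere on this segment by continuity. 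On the other hand $B$ contains no constant function, since $\Phi(c)=e^c|V|_\mu\overline h\ne0$. Both $E$ and $B$ are invariant under adding a constant to $u$, so a minimizing sequence may be normalized to have zero average; then $E(u_n)$ is bounded, hence so is $\|u_n\|_\infty$ (on a connected finite graph $\max u-\min u\le C\,E(u)^{1/p}$, and the average vanishes), and a subsequence converges to a minimizer $w\in B$, which is therefore non-constant.

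To extract the equation, observe that $d\Phi(w)$, with components $\mu_ih_ie^{w_i}$, is nonzero because $h\not\equiv0$, so $B$ is a smooth hypersurface near $w$ and the Lagrange multiplier rule (in the finite-dimensional space $C(V)$, using that $E\in C^1$ for $p>1$) yields $\lambda\in\mathds{R}$ with $-p\,\Delta_pw=\lambda\,h\,e^{w}$. Pairing this identity with $e^{-w}$ against $\mu$ and arguing exactly as in the necessity step, the resulting left-hand side $-p\sum_i\mu_ie^{-w_i}\Delta_pw_i$ is strictly negative (since $w$ is non-constant), while the right-hand side equals $\lambda|V|_\mu\overline h$; since $\overline h<0$ this forces $\lambda>0$. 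Then $u:=w+\ln(\lambda/p)$ is well defined, $\Delta_pu=\Delta_pw$ because additive constants do not affect $\Delta_p$, and substituting $e^{w}=(p/\lambda)e^{u}$ into $-p\,\Delta_pw=\lambda he^{w}$ gives $\Delta_pu=-he^u$, completing the proof.

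I expect the main obstacle to be the sufficiency direction, and within it the sign of the Lagrange multiplier together with the fact that the minimizer $w$ is non-constant (so that all the relevant inequalities are strict). The hypothesis that $h$ changes sign is used precisely to make $B$ nonempty, and $\overline h<0$ is used precisely to force $\lambda>0$, which is exactly what the final additive-constant rescaling requires; both of these arguments rest on the monotonicity of $t\mapsto e^{-t}$ channelled through the summation-by-parts identity, so the $p\ne2$ case costs essentially nothing beyond checking that no division-by-zero occurs in $|u_j-u_i|^{p-2}(u_j-u_i)$.
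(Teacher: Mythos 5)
Your proof is correct and follows essentially the same route as the paper: the same summation-by-parts argument for necessity, and for sufficiency the same constrained minimization of the $p$-Dirichlet energy over $\{f:\int_Vhe^{f}d\mu=0\}$, with the Lagrange multiplier's sign forced by pairing the Euler--Lagrange equation with $e^{-w}$ and the solution obtained by the shift $u=w+\ln(\lambda/p)$. Your only (welcome) streamlinings are an explicit proof that the constraint set is nonempty (the paper cites Grigor'yan--Lin--Yang for this) and dropping the zero-average constraint in favour of normalizing the minimizing sequence, which eliminates the second multiplier $\gamma$ and makes the non-constancy of the minimizer, hence the strictness of $\lambda>0$, immediate.
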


\begin{remark}
If $h\equiv0$, then the equation (\ref{def-p-KW-eq}) changes to $\Delta_pu=c$. By Lemma \ref{lem-Liouville}, we know the equation $\Delta_pu=c$ have solutions if only if $c=0$. In this case, the only solutions are constant functions.
\end{remark}

In case $c>0$, we have:

\begin{theorem}\label{thm-c>0}
Let $G=(V,E)$ be a finite graph. Consider the equation (\ref{def-p-KW-eq}) with $c>0$. Then (\ref{def-p-KW-eq}) has a solution if and only if $h$ is positive somewhere.
\end{theorem}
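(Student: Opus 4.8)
The \emph{only if} direction is immediate: summing $\Delta_p u=c-he^u$ against the vertex measure makes the left-hand side vanish, since the $p$-Laplacian is in divergence form and hence $\sum_{i\in V}\mu_i\Delta_p u_i=0$; this gives $\sum_{i\in V}\mu_i h_i e^{u_i}=c\sum_{i\in V}\mu_i>0$, so $h$ must be positive at some vertex. The substance is the \emph{if} direction, which I would establish by a constrained variational argument in the spirit of Kazdan--Warner \cite{KW}.

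The equation is the Euler--Lagrange equation of the $C^1$ functional
\[
J(u)=\frac{1}{2p}\sum_{i\in V}\sum_{j\sim i}\omega_{ij}|u_i-u_j|^p+c\sum_{i\in V}\mu_i u_i-\sum_{i\in V}\mu_i h_i e^{u_i},
\]
since $\partial J/\partial u_i=\mu_i(-\Delta_p u_i+c-h_i e^{u_i})$, so every critical point of $J$ solves (\ref{def-p-KW-eq}). As $J$ is unbounded below (it tends to $-\infty$ along constants $u\equiv t\to-\infty$), I would not minimize $J$ on all of $C(V)$. Instead I would write $u=v+t$ with $t=\overline u\in\mathbb R$ and $\overline v=0$; abbreviating $E_p(v)=\frac12\sum_{i\in V}\sum_{j\sim i}\omega_{ij}|v_i-v_j|^p$ and $m=\sum_{i\in V}\mu_i$, one gets $J(v+t)=\tfrac1p E_p(v)+cmt-e^{t}\sum_{i\in V}\mu_i h_i e^{v_i}$. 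On the open set $\mathcal U=\{v:\overline v=0,\ \sum_{i\in V}\mu_i h_i e^{v_i}>0\}$, which is nonempty precisely because $h$ is positive at some vertex (concentrate $v$ there), the map $t\mapsto J(v+t)$ is strictly concave with $\lim_{t\to\pm\infty}J(v+t)=-\infty$, hence is maximized at the unique $t(v)=\ln(cm)-\ln\sum_{i\in V}\mu_i h_i e^{v_i}$. The plan is then to minimize the reduced functional $\widetilde J(v):=\max_t J(v+t)=\tfrac1p E_p(v)-cm\ln\!\big(\sum_{i\in V}\mu_i h_i e^{v_i}\big)+\mathrm{const}$ over $\mathcal U$.

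I would show $\widetilde J$ attains an interior minimum on $\mathcal U$. As $v$ tends to $\partial\mathcal U$ within $\{\overline v=0\}$ the logarithmic term forces $\widetilde J\to+\infty$ while $E_p(v)$ stays bounded; and as $E_p(v)\to\infty$ (equivalently $\|v\|\to\infty$ on this subspace, $G$ being connected) the discrete Sobolev/Poincar\'e inequality of Section \ref{sect-preliminary-lemma} gives $\max_{i\in V}v_i\le\max_{i\in V}v_i-\min_{i\in V}v_i\le C\,E_p(v)^{1/p}$, whence $\ln\sum_{i\in V}\mu_i h_i e^{v_i}\le C'+C\,E_p(v)^{1/p}$ and $\widetilde J(v)\ge \tfrac1p E_p(v)-cmC\,E_p(v)^{1/p}-C''\to+\infty$ since $p>1$. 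Thus a minimizer $v_*\in\mathcal U$ exists. Setting $u_*=v_*+t(v_*)$: because $t(v_*)$ maximizes $t\mapsto J(v_*+t)$ the derivative of $J$ in the $\mathbf 1$-direction vanishes at $u_*$, and differentiating the identity $\widetilde J(v)=J(v+t(v))$ at $v_*$ (where the $\partial_t$-contribution already vanishes) shows that the derivative of $J$ in every $\{\overline v=0\}$-direction vanishes at $u_*$ as well; since $C(V)=\{\overline v=0\}\oplus\mathbb R\mathbf 1$, this means $\nabla J(u_*)=0$, i.e.\ $u_*$ solves (\ref{def-p-KW-eq}). Equivalently, minimizing $\tfrac1p E_p(u)+c\sum_{i\in V}\mu_i u_i$ over the constraint $\sum_{i\in V}\mu_i h_i e^{u_i}=cm$, the Lagrange multiplier is forced to equal $1$ upon summing the Euler--Lagrange relation against $\mu$.

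The main obstacle I anticipate is the coercivity of $\widetilde J$: one must simultaneously handle the blow-up at $\partial\mathcal U$ and the growth at infinity, and this is exactly where $p>1$ (rather than $p\ge1$) and the discrete Sobolev inequality enter. The remaining point---verifying that the reduced minimizer is a genuine critical point of $J$ (equivalently, that the multiplier is $1$)---is elementary on a finite graph once the envelope identity is in place. One could alternatively attempt the method of upper and lower solutions, using the monotone iteration provided by Theorem \ref{thm-L}, but constructing an ordered pair of sub- and supersolutions when $h$ changes sign appears more delicate than the argument above, so I would favor the variational route here.
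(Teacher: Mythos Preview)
Your argument is correct and follows essentially the same variational strategy as the paper: both minimize $\tfrac{1}{p}E_p(u)+c\int_V u\,d\mu$ under the constraint $\int_V he^u\,d\mu=c\,\mathrm{Vol}(G)$ (you simply eliminate the constraint first by optimizing over the constant direction, arriving at the reduced functional $\widetilde J$, whereas the paper works directly with the constraint and identifies the Lagrange multiplier as $1$ afterward). The one notable technical difference is in the coercivity step: the paper invokes the Trudinger--Moser-type Lemma~\ref{lem-Trudinger-Moser} with an $\epsilon$-optimization to bound $\ln\int_V he^{f-\overline f}d\mu$ by $\tfrac{\epsilon^p}{p}D(f)+C_\epsilon$, while you use the more direct (and on a finite graph equally valid) oscillation bound $\max_i v_i\le C\,E_p(v)^{1/p}$; both yield the same lower bound $\tfrac{1}{p}E_p-C\,E_p^{1/p}-C'$ and hence coercivity since $p>1$.
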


In case $c<0$, we have:
\begin{theorem}\label{thm-c<0}
Let $G=(V,E)$ be a finite graph. Consider the equation (\ref{def-p-KW-eq}) with $c<0$. If (\ref{def-p-KW-eq}) has a solution, then $\overline{h}<0$. On the contrary, if $\overline{h}<0$, then there exists a constant $c_-(h)\in[-\infty,0)$ depending only on $h$ such that (\ref{def-p-KW-eq}) has a solution for any $c\in(c_-(h),0)$, but has no solution for any $c<c_-(h)$.
\end{theorem}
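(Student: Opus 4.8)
The plan is to establish necessity by a short summation-by-parts identity and sufficiency by the method of upper and lower solutions anchored to Theorem \ref{thm-L}. For necessity, let $u$ solve (\ref{def-p-KW-eq}) with $c<0$; multiplying by $e^{-u}$, summing over $V$ against $\mu$ and integrating by parts,
\[
\sum_{i\in V}\mu_i e^{-u_i}\Delta_p u_i=-\tfrac12\sum_{i\sim j}\omega_{ij}|u_j-u_i|^{p-2}(u_j-u_i)\bigl(e^{-u_j}-e^{-u_i}\bigr)\ \ge\ 0,
\]
because $t\mapsto e^{-t}$ is decreasing, so every summand on the right is $\le 0$. Hence $0\le\sum_i\mu_i e^{-u_i}(c-h_ie^{u_i})=c\sum_i\mu_i e^{-u_i}-\sum_i\mu_i h_i$, and as $c<0$ and $\sum_i\mu_i e^{-u_i}>0$ this forces $\sum_i\mu_i h_i<0$, i.e.\ $\overline h<0$.

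For the converse, assume $\overline h<0$. Call $\phi$ a lower solution and $\psi$ an upper solution of (\ref{def-p-KW-eq}) if $\Delta_p\phi\ge c-he^{\phi}$ and $\Delta_p\psi\le c-he^{\psi}$. The basic lemma is: if such $\phi\le\psi$ exist, then (\ref{def-p-KW-eq}) has a solution with $\phi\le u\le\psi$ --- choose $k>0$ so large that $s\mapsto c-h_ie^{s}-ks$ is nonincreasing on $[\min_V\phi,\max_V\psi]$ for each $i$ (e.g.\ $k\ge e^{\max_V\psi}\max_V|h|$), put $L=\Delta_p-k$, and run the iteration $u^{(0)}=\psi$, $Lu^{(n+1)}=c-he^{u^{(n)}}-ku^{(n)}$; by Theorem \ref{thm-L}, $L$ is invertible with $-L^{-1}$ order-preserving, so an induction yields $\phi\le\cdots\le u^{(n+1)}\le u^{(n)}\le\cdots\le\psi$ and the sequence converges in the finite-dimensional space $C(V)$ to a solution. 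I record two observations: (i) for any $c<0$, a sufficiently negative constant $-M$ is a lower solution, since $\Delta_p(-M)=0\ge c-he^{-M}$ once $M\ge\log(\max_V|h|/|c|)$; (ii) if $u_{c_0}$ solves (\ref{def-p-KW-eq}) with parameter $c_0$ and $c_0<c<0$, then $\Delta_p u_{c_0}=c_0-he^{u_{c_0}}\le c-he^{u_{c_0}}$, so $u_{c_0}$ is an upper solution for the parameter $c$.

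I then show that the set $\Lambda$ of all $c<0$ for which (\ref{def-p-KW-eq}) has a solution is nonempty, splitting on the sign of $h$. If $h\le 0$ on $V$ (so $h\not\equiv0$ as $\overline h<0$), the functional $J_c(u)=\tfrac1{2p}\sum_{i\sim j}\omega_{ij}|u_i-u_j|^{p}+c\sum_i\mu_iu_i-\sum_i\mu_ih_ie^{u_i}$, with (\ref{def-p-KW-eq}) as Euler--Lagrange equation, is coercive and bounded below for every $c<0$ (the nonnegative term $\sum_i\mu_i|h_i|e^{u_i}$ and the Dirichlet energy control the mean-zero part of $u$ and push the mean upward, while $c\sum_i\mu_iu_i$ controls it downward --- here one uses the Sobolev/Poincar\'e inequality on $G$), so it attains a minimizer, which solves (\ref{def-p-KW-eq}); thus $\Lambda=(-\infty,0)$ and $c_-(h)=-\infty$. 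If $h>0$ somewhere, then $\overline h<0$ forces $h$ to change sign, so Theorem \ref{thm-c=0} provides $u_0$ with $\Delta_p u_0=-he^{u_0}$, whence $\sum_i\mu_ih_ie^{u_{0,i}}=0$. For small $|c|$ I build an upper solution by perturbing $u_0$: taking $\overline u=\log(e^{u_0}-\varepsilon\zeta)$ with $\zeta>0$, $\varepsilon>0$ small, gives $\Delta_p\overline u+he^{\overline u}=(\Delta_p\overline u-\Delta_p u_0)-\varepsilon h\zeta=-\varepsilon M\xi+o(\varepsilon)$, where $\xi=\zeta e^{-u_0}>0$ and $M=D\Delta_p(u_0)+\mathrm{diag}(he^{u_0})$ is the (symmetric) linearization of $\Delta_p$ at $u_0$, so it suffices to find $\xi>0$ with $M\xi>0$ pointwise. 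Now $M$ has nonnegative off-diagonal entries, and since $\langle(-M)\mathbf 1,\mathbf 1\rangle_\mu=-\sum_i\mu_ih_ie^{u_{0,i}}=0$ while $(-M)\mathbf 1=-he^{u_0}\not\equiv0$, the least eigenvalue of $-M$ is strictly negative; Perron--Frobenius then yields such a $\xi>0$, so $\overline u$ is an upper solution for all sufficiently small $|c|$. Pairing it with a sufficiently negative constant and applying the basic lemma gives $(-\delta,0)\subseteq\Lambda$ for some $\delta>0$.

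Finally, by observation (ii) and the basic lemma, $c_0\in\Lambda$ implies $(c_0,0)\subseteq\Lambda$, so $\Lambda$ is an interval with $\sup\Lambda=0$; setting $c_-(h):=\inf\Lambda\in[-\infty,0)$ gives a solution for every $c\in(c_-(h),0)$ and, by definition of the infimum, none for $c<c_-(h)$. The main difficulty I anticipate is the perturbative construction of the upper solution when $h$ is positive somewhere, namely justifying the first-order expansion of $\Delta_p\overline u-\Delta_p u_0$ near $u_0$: for $p\ne2$ the nonlinearity $s\mapsto|s|^{p-2}s$ degenerates at $0$ (and for $1<p<2$ is not even differentiable there), so along edges where $u_0$ has no increment the linearization $D\Delta_p(u_0)$ requires separate care; I expect to handle this by first replacing $u_0$ with a nearby function having a nonzero increment along every edge and absorbing the error, or by estimating $\Delta_p\overline u-\Delta_p u_0$ directly and dominating the $O(\varepsilon^{p-1})$ contributions of the exceptional edges via the strict positivity of $M\xi$.
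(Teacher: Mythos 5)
Your necessity argument and your overall sufficiency framework (monotone iteration between a constant lower solution and an upper solution, driven by the invertibility and order-preservation of $L=\Delta_p-k$ from Theorem \ref{thm-L}) coincide with the paper's. The genuine problem is the one step you yourself flag at the end: the construction of an upper solution for $c$ near $0$ when $h$ changes sign. Your plan is to linearize $\Delta_p$ at the solution $u_0$ of $\Delta_p u_0=-he^{u_0}$ and find $\xi>0$ with $M\xi>0$ via Perron--Frobenius. For $p\neq2$ this has two independent failure points. First, on any edge $i\sim j$ with $u_{0,i}=u_{0,j}$ the map $s\mapsto|s|^{p-2}s$ is not differentiable at $0$ when $1<p<2$ and has vanishing derivative when $p>2$, so $D\Delta_p(u_0)$ either does not exist or drops that edge; nothing in the problem rules out such edges. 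Second, even when the linearization exists, the Perron--Frobenius step needs irreducibility of $M$ (equivalently, connectedness of the subgraph of edges with nonzero increment), and when edges drop out the principal eigenvector need not be strictly positive, so $M\xi>0$ pointwise is not guaranteed. Your suggested repairs (perturbing $u_0$ edge-by-edge, or absorbing $O(\varepsilon^{p-1})$ errors) are not carried out and it is not evident they close the argument. The case split is also unnecessary: the $h\le0$ branch is not needed for this theorem at all.

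The paper avoids the linearization entirely by a construction you have all the ingredients for: by Theorem \ref{thm-bar-f-equation} solve $\Delta_p v=\overline h-h$, and use the homogeneity $\Delta_p(av)=a^{p-1}\Delta_p v$ for $a>0$. Choosing $a$ small enough that $|e^{av}-1|\le -\overline h/(2|h|_M)$ and setting $u_+=av+(p-1)\ln a$ gives
\[
\Delta_p u_+-c+he^{u_+}=a^{p-1}\overline h+a^{p-1}h\bigl(e^{av}-1\bigr)-c\le \tfrac12 a^{p-1}\overline h-c\le0
\]
for all $c\in[-\tfrac12(-\overline h)a^{p-1},0)$, i.e.\ an explicit upper solution for a whole left neighborhood of $0$, valid for every $p>1$ and with no case distinction on the sign of $h$. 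I recommend replacing your perturbative step with this; the rest of your argument (interval structure of the solvable set via the observation that a solution for $c_0$ is an upper solution for any $c\in(c_0,0)$, and the definition $c_-(h)=\inf\Lambda$) then goes through.
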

If $h$ is non-positive and not identically zero, we can show more:
\begin{theorem}\label{thm-c<0-h<0}
Let $G=(V,E)$ be a finite graph. Consider the equation (\ref{def-p-KW-eq}) with $c<0$. Assume $h\leq0$ and $h\not\equiv0$, then $c_-(h)=-\infty$ and hence (\ref{def-p-KW-eq}) always has a solution, where $c_-(h)$ is given as in Theorem \ref{thm-c<0}.
\end{theorem}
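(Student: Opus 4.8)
The plan is to exhibit, for every $c<0$, a lower solution $\underline u$ and an upper solution $\bar u$ of (\ref{def-p-KW-eq}) with $\underline u\le\bar u$, and then to run the monotone iteration built on Theorem \ref{thm-L} (the method of upper and lower solutions already used above). Recall that $\bar u$ is an \emph{upper solution} if $\Delta_p\bar u\le c-he^{\bar u}$ and $\underline u$ a \emph{lower solution} if $\Delta_p\underline u\ge c-he^{\underline u}$; given such a pair with $\underline u\le\bar u$, fix a positive constant $k$ large enough that $t\mapsto h_ie^{t}+kt$ is nondecreasing on $[\min_V\underline u,\max_V\bar u]$ for every $i$, put $L=\Delta_p-k$, and iterate $u_{n+1}=L^{-1}\bigl(c-he^{u_n}-ku_n\bigr)$ starting from $u_0=\bar u$. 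Since $-L^{-1}$ preserves order by Theorem \ref{thm-L}, the sequence $\{u_n\}$ is nonincreasing and bounded below by $\underline u$; as $C(V)$ is finite dimensional it converges, and by continuity of $\Delta_p$ its limit solves (\ref{def-p-KW-eq}).

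The subsolution is easy. Since $h\le 0$ and $h\not\equiv 0$ we have $H:=\max_V|h|>0$, so the constant $\underline u\equiv a$ with $a\le\ln(|c|/H)$ satisfies $\Delta_p a=0$ and $c-h_ie^{a}=c+|h_i|e^{a}\le c+He^{a}\le c+|c|=0$, hence $\Delta_p\underline u\ge c-he^{\underline u}$; decreasing $a$ further we may also arrange $a\le\min_V\bar u$ after $\bar u$ is fixed.

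The real point is the supersolution, because a constant cannot serve: at a vertex $i$ with $h_i=0$ an upper solution must satisfy $\Delta_p\bar u_i\le c<0$, and this cannot hold at every such vertex since $\sum_i\mu_i\Delta_p\bar u_i=0$. Here Theorem \ref{thm-bar-f-equation} is exactly the right tool. Let $V_-=\{i:h_i<0\}$, which is nonempty, and $V_0=V\setminus V_-$. Choose $\rho\in C(V)$ with $\overline\rho=0$ and $\rho_i<0$ on $V_0$ (for instance $\rho_i=-1$ on $V_0$ and a suitable positive constant on $V_-$; if $V_0=\varnothing$ take $\rho\equiv0$), and, since $\overline\rho=0$, let $\phi$ solve $\Delta_p\phi=\rho$ by Theorem \ref{thm-bar-f-equation} (applied to $f=-\rho$). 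Using the homogeneity $\Delta_p(m\phi)=m^{p-1}\rho$ for $m>0$, first pick $m$ so large that $m^{p-1}\rho_i\le c$ for all $i\in V_0$; then for $\bar u:=m\phi+b$ we get, on $V_0$ (where $h_i=0$), $\Delta_p\bar u_i=m^{p-1}\rho_i\le c=c-h_ie^{\bar u_i}$ for every $b$. On the finite set $V_-$ the left side $m^{p-1}\rho_i$ is now fixed while $c-h_ie^{\bar u_i}=c+|h_i|e^{m\phi_i+b}\to+\infty$ as $b\to+\infty$, so a large enough $b$ makes $\bar u$ an upper solution on all of $V$. With $(\underline u,\bar u)$ in hand the iteration above produces a solution of (\ref{def-p-KW-eq}); since $c<0$ was arbitrary and $\overline h<0$ (so $c_-(h)$ is defined as in Theorem \ref{thm-c<0}), this forces $c_-(h)=-\infty$.

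Thus the only genuinely non-routine step is the construction of $\bar u$: choosing $\rho$ so that $\Delta_p\phi$ is strictly negative exactly on the zero set of $h$, scaling it to push that deficit below $c$, and then exploiting the additive constant $b$ to absorb the now very favorable term $-he^{\bar u}$ on $V_-$. Everything else is routine verification of inequalities together with the standard convergence argument for the monotone scheme.
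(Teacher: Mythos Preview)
Your proof is correct and follows essentially the same strategy as the paper: build an upper solution of the form $m\phi+b$ where $\Delta_p\phi$ has zero mean (via Theorem \ref{thm-bar-f-equation}), take a sufficiently negative constant as lower solution, and run the monotone iteration based on Theorem \ref{thm-L}. The paper's execution is a bit more streamlined: it simply takes $\phi=v$ with $\Delta_p v=\overline h-h$ (so your $\rho$ is $\overline h-h$, automatically negative on $V_0$ and of zero mean), and then the single computation $\Delta_p u_+-c+he^{u_+}=a^{p-1}\overline h-c+h(e^{u_+}-a^{p-1})\le 0$ disposes of all vertices at once without splitting into $V_0$ and $V_-$.
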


\section{Preliminaries and some useful lemmas}
\label{sect-preliminary-lemma}
Throughout this paper, denote $q$ as the conjugate number of $p$, that is, $\frac{1}{p}+\frac{1}{q}=1$. For any $f\in C(V)$, denote $f_M$ (or $f_m$) as the maximum (or minimum) value of $f$, and denote $C_{p,f,G}$ as some positive constant depending only on the information of $f$, $p$ and $G$. Note that the information of $G$ contains $V$, $E$, $\mu$ and $\omega$. The following lemma can be considered as a Liouville-type theorem on a finite graph:
\begin{lemma}\label{lem-Liouville}
Given $f\in C(V)$, if $\Delta_pf\geq0$ (or $\leq 0$), then $f$ equals to a constant.
\end{lemma}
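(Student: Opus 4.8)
The plan is to run a discrete maximum principle and then use connectedness of $G$. Assume first that $\Delta_pf\geq0$ on $V$. Pick a vertex $i_0\in V$ at which $f$ attains its maximum value $f_M$. Evaluating the $p$-Laplacian there gives
$$0\leq\Delta_pf_{i_0}=\frac{1}{\mu_{i_0}}\sum_{j\thicksim i_0}\omega_{i_0j}\,|f_j-f_{i_0}|^{p-2}(f_j-f_{i_0}).$$
Since $\mu_{i_0}>0$ and $\omega_{i_0j}>0$, and since the nonlinearity $t\mapsto|t|^{p-2}t=\mathrm{sgn}(t)|t|^{p-1}$ has the same sign as $t$ for every $p>1$, each summand is $\leq0$ because $f_j-f_{i_0}\leq0$ by maximality. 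Hence every summand vanishes, which forces $f_j=f_{i_0}=f_M$ for all $j\thicksim i_0$.

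Next I would propagate this. Let $A=\{i\in V:f_i=f_M\}$; it is nonempty, and the computation above shows that if $i\in A$ then every neighbour of $i$ also lies in $A$. Because $G$ is connected and finite, $A=V$, i.e.\ $f\equiv f_M$ is constant. For the case $\Delta_pf\leq0$, apply what was just proved to $-f$: using $\Delta_p(\lambda f)=\mathrm{sgn}(\lambda)|\lambda|^{p-1}\Delta_pf$ with $\lambda=-1$ we get $\Delta_p(-f)=-\Delta_pf\geq0$, so $-f$, and therefore $f$, is constant.

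There is no genuine obstacle here; the only point deserving a word of care is that the map $t\mapsto|t|^{p-2}t$ is sign-preserving and continuous, with value $0$ at $t=0$, for all $p>1$ (in particular for $1<p<2$, where $|t|^{p-2}$ itself is unbounded near $0$), so that each term of the sum defining $\Delta_pf_{i_0}$ has a well-defined sign and the argument goes through uniformly in $p$. (Alternatively, one could integrate $\Delta_pf$ against $\mu$, use symmetry of $\omega$ to see $\sum_i\mu_i\Delta_pf_i=0$, conclude $\Delta_pf\equiv0$, then pair with $f$ and sum by parts to get $\sum_{i\thicksim j}\omega_{ij}|f_j-f_i|^p=0$; but the maximum-principle argument is shorter.)
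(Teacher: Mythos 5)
Your proof is correct and follows essentially the same route as the paper: evaluate $\Delta_p f$ at a maximum point, note every summand is nonpositive so all must vanish, and propagate equality by connectedness. The extra remarks about the sign-preserving nonlinearity for $1<p<2$ and the alternative summation-by-parts argument are fine but not needed.
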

\begin{proof}
Suppose $\Delta_pf\geq0$. Choose $i\in V$, such that $f_i=\max\limits_{j\in V}f_j$. Then
$$0\leq\Delta_pf_i=\frac{1}{\mu_i}\sum\limits_{j\thicksim i}\omega_{ij}|f_j-f_i|^p(f_j-f_i)\leq0.$$
Hence $f_j=f_i$ for all $j\thicksim i$. Since $G$ is connected, $f$ equals to a constant. If $\Delta_pf\leq0$, then $\Delta_p(-f)\geq0$,
and hence $f$ also equals to a constant.\qed\\
\end{proof}

For any $f\in C(V)$, we define an integral of $f$ over $V$ with respect to the vertex weight $\mu$ by
$$\int_Vfd\mu=\sum\limits_{i\in V}\mu_if_i.$$
Thus $\overline{f}=\int_Vfd\mu\big/\int_Vd\mu$. Set $\mathrm{Vol}(G)=\int_Vd\mu$. Similarly, for any function $g$ defined on the edge set $E$, we define an integral of $g$ over $E$ with respect to the edge weight $\omega$ by
$$\int_Egd\omega=\sum\limits_{i\thicksim j}\omega_{ij}g_{ij}.$$
Specially, for any $f\in C(V)$,
$$\int_E|\nabla f|^pd\omega=\sum\limits_{i\thicksim j}\omega_{ij}|f_j-f_i|^p,$$
where $|\nabla f|$ is defined on the edge set $E$, and $|\nabla f|_{ij}=|f_j-f_i|$ for each edge $i\thicksim j$. Next we consider the Sobolev space $W^{1,\,p}$ on the graph $G$. Define
$$W^{1,\,p}(G)=\{u\in C(V):\int_E|\nabla\varphi|^pd\omega+\int_V|\varphi|^pd\mu<+\infty\},$$
and
$$\|u\|_{W^{1,\,p}(G)}=\left(\int_E|\nabla\varphi|^pd\omega+\int_V|\varphi|^pd\mu\right)^{\frac{1}{p}}.$$
Since $G$ is a finite graph, then
$W^{1,\,p}(G)$ is exactly $C(V)$, a finite dimensional linear space. This implies the following Sobolev embedding:

\begin{lemma}\label{lem-Sobolev-embedding}
Let $G=(V,E)$ be a finite graph. The Sobolev space $W^{1,\,p}(G)$ is pre-compact. Namely, if $\{\varphi_n\}$ is bounded in $W^{1,\,p}(G)$, then there exists some $\varphi\in W^{1,\,p}(G)$ such that up to a subsequence, $\varphi_n\rightarrow\varphi$ in $W^{1,\,p}(G)$.
\end{lemma}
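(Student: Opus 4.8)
The plan is to exploit the fact, already recorded in the text just above the statement, that $W^{1,p}(G)$ coincides with $C(V)$, which is a real vector space of finite dimension $N:=|V|$. First I would fix an enumeration $V=\{i_1,\dots,i_N\}$ and identify each $u\in C(V)$ with the vector $(u_{i_1},\dots,u_{i_N})\in\mathds{R}^N$. Under this identification one checks that $\|\cdot\|_{W^{1,p}(G)}$ is an honest norm on $\mathds{R}^N$: it is plainly a seminorm, and if $\|u\|_{W^{1,p}(G)}=0$ then in particular $\int_V|u|^pd\mu=\sum_{i\in V}\mu_i|u_i|^p=0$, which forces $u_i=0$ for every $i$ since each $\mu_i>0$. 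So $\|\cdot\|_{W^{1,p}(G)}$ is a norm on a finite-dimensional space.

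The next step is to invoke the classical fact that all norms on a finite-dimensional vector space are equivalent. Comparing $\|\cdot\|_{W^{1,p}(G)}$ with the maximum norm $\|u\|_\infty:=\max_{i\in V}|u_i|$, there exist constants $0<c_1\le c_2$, depending only on $p$ and on the information of $G$, such that
\[
c_1\|u\|_\infty\le\|u\|_{W^{1,p}(G)}\le c_2\|u\|_\infty\qquad\text{for all }u\in C(V).
\]
Now suppose $\{\varphi_n\}$ is bounded in $W^{1,p}(G)$, say $\|\varphi_n\|_{W^{1,p}(G)}\le M$ for all $n$. By the left-hand inequality, $\|\varphi_n\|_\infty\le M/c_1$, so for each fixed vertex $i$ the real sequence $\{(\varphi_n)_i\}_n$ is bounded. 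Applying the Bolzano--Weierstrass theorem successively at the finitely many vertices $i_1,\dots,i_N$ (a finite diagonal extraction), we obtain a subsequence, still denoted $\{\varphi_n\}$, and real numbers $\varphi_{i_1},\dots,\varphi_{i_N}$ with $(\varphi_n)_i\to\varphi_i$ for every $i\in V$. Define $\varphi\in C(V)=W^{1,p}(G)$ by these values. Then $\|\varphi_n-\varphi\|_\infty=\max_{i\in V}|(\varphi_n)_i-\varphi_i|\to0$, and the right-hand inequality above gives $\|\varphi_n-\varphi\|_{W^{1,p}(G)}\le c_2\|\varphi_n-\varphi\|_\infty\to0$, which is the assertion.

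There is essentially no serious obstacle: the entire content is the finiteness of $V$, which collapses the Sobolev embedding to the Heine--Borel property of $\mathds{R}^N$. The only minor point requiring attention is the verification that $\|\cdot\|_{W^{1,p}(G)}$ is positive definite rather than merely a seminorm, so that the equivalence-of-norms theorem applies; as noted, this is immediate from $\mu>0$. Alternatively, one could bypass the equivalence-of-norms theorem entirely and argue directly that, since $\int_E|\nabla\cdot|^pd\omega$ and $\int_V|\cdot|^pd\mu$ are continuous functions of the finitely many vertex values, vertexwise convergence $(\varphi_n)_i\to\varphi_i$ already forces $\|\varphi_n-\varphi\|_{W^{1,p}(G)}\to0$.
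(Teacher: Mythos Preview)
Your argument is correct and is essentially the approach the paper takes: the paper gives no explicit proof of this lemma, simply noting beforehand that ``since $G$ is a finite graph, then $W^{1,p}(G)$ is exactly $C(V)$, a finite dimensional linear space,'' and treating the Sobolev embedding as an immediate consequence. Your proposal fleshes out exactly these details---identifying $C(V)$ with $\mathds{R}^N$, checking positive definiteness of the norm, invoking equivalence of norms, and applying Bolzano--Weierstrass---which is precisely what the paper's one-line remark implicitly intends.
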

\begin{remark}
The convergence in $W^{1,\,p}(G)$ is in fact pointwise convergence.
\end{remark}
As a consequence of Lemma \ref{lem-Sobolev-embedding}, we have the following $p$-th Poincar\'{e} inequality:
\begin{lemma}\label{lem-p-poincare}
Let $G=(V,E)$ be a finite graph. For all functions $\varphi\in C(V)$ with $\overline{\varphi}=0$, there exists some positive constant $C_{p,G}$ depending on $G$ and $p$ such that
$$\int_V|\varphi|^pd\mu\leq C_{p,G}\int_E|\nabla\varphi|^pd\omega.$$
\end{lemma}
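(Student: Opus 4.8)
The plan is to argue by contradiction using the pre-compactness of $W^{1,\,p}(G)$ established in Lemma \ref{lem-Sobolev-embedding}. Suppose the desired inequality fails for every constant. Then for each $n\in\mathds{N}$ there exists $\varphi_n\in C(V)$ with $\overline{\varphi_n}=0$ and $\int_V|\varphi_n|^pd\mu > n\int_E|\nabla\varphi_n|^pd\omega$. Since the inequality is homogeneous of degree $p$ in $\varphi_n$, I may normalize so that $\int_V|\varphi_n|^pd\mu = 1$ for all $n$; then $\int_E|\nabla\varphi_n|^pd\omega < 1/n \to 0$. In particular $\|\varphi_n\|_{W^{1,\,p}(G)}^p = \int_E|\nabla\varphi_n|^pd\omega + \int_V|\varphi_n|^pd\mu < 1 + 1 = 2$, so $\{\varphi_n\}$ is bounded in $W^{1,\,p}(G)$.

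By Lemma \ref{lem-Sobolev-embedding}, up to a subsequence $\varphi_n\to\varphi$ in $W^{1,\,p}(G)$ for some $\varphi\in C(V)$, and by the Remark following that lemma the convergence is pointwise on the finite set $V$. Passing to the limit in the three relevant quantities (each a finite sum, so continuous in the pointwise topology): first, $\int_V|\varphi|^pd\mu = \lim_n \int_V|\varphi_n|^pd\mu = 1$, so $\varphi\not\equiv 0$; second, $\overline{\varphi} = \lim_n \overline{\varphi_n} = 0$; third, $\int_E|\nabla\varphi|^pd\omega = \lim_n \int_E|\nabla\varphi_n|^pd\omega = 0$. From $\int_E|\nabla\varphi|^pd\omega = 0$ we get $|\varphi_j - \varphi_i| = 0$ for every edge $i\sim j$, and since $G$ is connected, $\varphi$ is constant on $V$. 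Combined with $\overline{\varphi} = 0$, this forces $\varphi\equiv 0$, contradicting $\int_V|\varphi|^pd\mu = 1$. Hence the inequality holds with some constant $C_{p,G}>0$.

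I do not expect any serious obstacle here: the only point requiring a little care is the homogeneity normalization (legitimate because both sides scale as $|\lambda|^p$ under $\varphi\mapsto\lambda\varphi$, and the mean-zero constraint is preserved) and the observation that on a finite graph all the integrals are finite sums, so convergence of function values immediately gives convergence of the integrals — no dominated-convergence argument is needed. Alternatively, one could give a direct compactness argument by minimizing $\int_E|\nabla\varphi|^pd\omega$ over the set $\{\varphi : \overline{\varphi}=0,\ \int_V|\varphi|^pd\mu = 1\}$, which is compact in the finite-dimensional space $C(V)$, and checking the minimum is positive by the same connectedness reasoning; the contradiction approach above is the cleaner route.
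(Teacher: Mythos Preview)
Your proof is correct and follows essentially the same contradiction argument as the paper: normalize a violating sequence to have unit $L^p$-norm and vanishing gradient energy, extract a convergent subsequence via Lemma~\ref{lem-Sobolev-embedding}, and use connectedness plus the mean-zero constraint to force the limit to be identically zero, contradicting the normalization. The only difference is that you spell out the homogeneity justification and the pointwise-convergence-to-finite-sum passage a bit more explicitly than the paper does.
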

\begin{proof}
If the conclusion is not true, we can choose a sequence of functions $\{\varphi_n\}$ satisfying $\overline{\varphi_n}=0$, $\int_V|\varphi_n|^pd\mu=1$, but $\int_E|\nabla\varphi_n|^pd\omega\rightarrow0$ as $n\rightarrow+\infty$. Thus $\|\varphi_n\|_{W^{1,\,p}(G)}$ is bounded. By Lemma \ref{lem-Sobolev-embedding}, there exists some $\varphi\in C(V)$ such that up to a subsequence, $\varphi_n\rightarrow\varphi$
in $W^{1,\,p}(G)$. We may well denote this subsequence as $\varphi_n$. Hence
$$\int_E|\nabla\varphi|^pd\omega=\lim\limits_{n\rightarrow\infty}\int_E|\nabla\varphi_n|^pd\omega=0.$$
This implies that $\varphi$ is a constant and hence
$$\varphi\equiv\overline{\varphi}=\lim\limits_{n\rightarrow\infty}\overline{\varphi_n}=0,$$
which contradicts with
$$\int_V|\varphi|^pd\mu=\lim\limits_{n\rightarrow\infty}\int_V|\varphi_n|^pd\mu=1.$$
 \qed
\end{proof}

\begin{lemma}\label{lem-Trudinger-Moser}
Let $G=(V,E)$ be a finite graph. Fix any $\beta\in\mathds{R}$ and any $\alpha\geq0$. Then there exist a constant $C_{p,G}$ and a constant $C_{\alpha,\beta,p,G}$, such that for all functions $\varphi\in C(V)$ with $\int_E|\nabla\varphi|^pd\omega\leq1$ and $\overline{\varphi}=0$, there holds
$$\|\varphi\|_{\infty}\leq C_{p,G}$$
and
$$\int_Ve^{\beta|\varphi|^{\alpha}}d\mu\leq C_{\alpha,\beta,p,G}.$$
\end{lemma}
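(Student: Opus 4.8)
The plan is to reduce everything to the already-established Poincar\'{e} and Sobolev results. For the $L^\infty$ bound, observe that since $\overline{\varphi}=0$ and $\int_E|\nabla\varphi|^pd\omega\leq1$, Lemma \ref{lem-p-poincare} gives $\int_V|\varphi|^pd\mu\leq C_{p,G}\int_E|\nabla\varphi|^pd\omega\leq C_{p,G}$. Hence $\|\varphi\|_{W^{1,\,p}(G)}^p=\int_E|\nabla\varphi|^pd\omega+\int_V|\varphi|^pd\mu\leq 1+C_{p,G}$ is bounded by a constant depending only on $p$ and $G$. Because $G$ is a finite graph, $W^{1,\,p}(G)=C(V)$ is finite-dimensional, so all norms on it are equivalent; in particular there is a constant $C'_{p,G}$ with $\|\varphi\|_\infty\leq C'_{p,G}\|\varphi\|_{W^{1,\,p}(G)}$. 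Combining the two inequalities yields $\|\varphi\|_\infty\leq C_{p,G}$ after renaming the constant. (Alternatively one can argue directly: if no such uniform bound existed, take a sequence $\varphi_n$ with $\overline{\varphi_n}=0$, $\int_E|\nabla\varphi_n|^pd\omega\leq1$ and $\|\varphi_n\|_\infty\to\infty$; normalize $\psi_n=\varphi_n/\|\varphi_n\|_\infty$, apply Lemma \ref{lem-Sobolev-embedding} to extract a pointwise limit $\psi$, note $\int_E|\nabla\psi|^pd\omega=0$ forces $\psi$ constant, then $\overline{\psi}=0$ forces $\psi\equiv0$, contradicting $\|\psi_n\|_\infty=1$.)

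For the exponential integral bound, the $L^\infty$ estimate does all the work. For each vertex $i\in V$ we have $|\varphi_i|\leq\|\varphi\|_\infty\leq C_{p,G}$, hence $|\varphi_i|^\alpha\leq C_{p,G}^\alpha$ (valid for any $\alpha\geq0$, using $C_{p,G}^0=1$ when $\alpha=0$), and therefore $e^{\beta|\varphi_i|^\alpha}\leq e^{|\beta|C_{p,G}^\alpha}$. Summing against the vertex measure,
\[
\int_Ve^{\beta|\varphi|^{\alpha}}d\mu=\sum_{i\in V}\mu_i\,e^{\beta|\varphi_i|^\alpha}\leq e^{|\beta|C_{p,G}^\alpha}\sum_{i\in V}\mu_i=e^{|\beta|C_{p,G}^\alpha}\,\mathrm{Vol}(G)=:C_{\alpha,\beta,p,G},
\]
a constant depending only on $\alpha$, $\beta$, $p$ and $G$, as claimed.

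There is essentially no obstacle here: on a finite graph the Trudinger--Moser phenomenon degenerates completely, because the Sobolev space is finite-dimensional and the constraint $\overline{\varphi}=0$ together with $\int_E|\nabla\varphi|^pd\omega\leq1$ already confines $\varphi$ to a compact set in $C(V)$, on which every continuous function — in particular $e^{\beta|\cdot|^\alpha}$ — is bounded. The only mild point to be careful about is that the bound must be uniform over the (infinite) family of admissible $\varphi$, which is exactly what the reduction to a bounded subset of the finite-dimensional space $W^{1,\,p}(G)$, or the compactness argument via Lemma \ref{lem-Sobolev-embedding}, secures.
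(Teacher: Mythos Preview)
Your proof is correct and follows essentially the same strategy as the paper: Poincar\'{e} controls $\int_V|\varphi|^p d\mu$, finite-dimensionality upgrades this to an $L^\infty$ bound, and the exponential estimate then follows pointwise. The only cosmetic difference is that the paper extracts the $L^\infty$ bound explicitly via $\mu_m\|\varphi\|_\infty\leq\int_V|\varphi|\,d\mu$ combined with H\"{o}lder (rather than invoking abstract norm equivalence), and it splits the exponential estimate into the cases $\beta\leq0$ and $\beta>0$ instead of using $|\beta|$.
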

\begin{proof}
By the H\"{o}lder inequality and Lemma \ref{lem-p-poincare}, we get
\begin{equation*}
\begin{aligned}
\mu_{m}\|\varphi\|_{\infty}\leq\mu_{m}\sum\limits_{i\in V}|\varphi|_i\leq & \int_V|\varphi|d\mu\\
\leq& \left(\int_V1^qd\mu\right)^{\frac{1}{q}}\left(\int_V|\varphi|^pd\mu\right)^{\frac{1}{p}}\\
\leq& C_{p,G}\left(\int_E|\nabla\varphi|^pd\omega\right)^\frac{1}{p}\leq C_{p,G}.
\end{aligned}
\end{equation*}
Hence $\|\varphi\|_{\infty}\leq \mu_{m}^{-1}C_{p,G}=C_{p,G}$. If $\beta\leq0$, then obviously
$$\int_Ve^{\beta|\varphi|^{\alpha}}d\mu\leq \int_V1d\mu\leq C_{G}.$$
If $\beta\geq0$, then
$$\int_Ve^{\beta|\varphi|^{\alpha}}d\mu\leq \int_Ve^{\beta\|\varphi\|_{\infty}^{\alpha}}d\mu\leq
\int_Ve^{\beta C_{p,G}^{\alpha}}d\mu\leq C_{\alpha,\beta,p,G}.$$
\end{proof}

\section{Proof of Theorem \ref{thm-L}}
\label{sect-prof-thm-L}
\subsection{$L$ is one to one}\label{subsect-L-one-to-one}
Suppose $(\Delta_p-k)f=(\Delta_p-k)g$, we need to show $f=g$. For each $i\in V$, we have
$$\Delta_pf_i-\Delta_pg_i=k_i(f_i-g_i).$$
Hence
$$\sum\limits_{j\thicksim i}\omega_{ij}\left(|f_j-f_i|^{p-2}(f_j-f_i)-|g_j-g_i|^{p-2}(g_j-g_i)\right)=\mu_ik_i(f_i-g_i).$$
For any $u,v\in C(V)$, denote
$$A(u,v)=\sum\limits_{i\thicksim j}\omega_{ij}|u_j-u_i|^{p-2}(u_j-u_i)(v_j-v_i),$$
then for the above $f$ and $g$, we have
\begin{equation*}
\begin{aligned}
|A(f,g)|\leq&\sum\limits_{i\thicksim j}\omega_{ij}|f_j-f_i|^{p-1}|g_j-g_i|\\
=&\sum\limits_{i\thicksim j}\big(\omega_{ij}^{1/p}|g_j-g_i|\big)\big(\omega_{ij}^{1/q}|f_j-f_i|^{p-1}\big)\\
\leq & \Big(\sum\limits_{i\thicksim j}\omega_{ij}|g_j-g_i|^p\Big)^{\frac{1}{p}}
\Big(\sum\limits_{i\thicksim j}\omega_{ij}|f_j-f_i|^{(p-1)q}\Big)^{\frac{1}{q}}\\
\leq&\frac{1}{p}\sum\limits_{i\thicksim j}\omega_{ij}|g_j-g_i|^{p}+\frac{1}{q}\sum\limits_{i\thicksim j}\omega_{ij}|f_j-f_i|^{p}\\
=&\frac{1}{q}\int_E|\nabla f|^pd\omega+\frac{1}{p}\int_E|\nabla g|^pd\omega.
\end{aligned}
\end{equation*}
Note we have used the inequality $a^{\frac{1}{p}}b^{\frac{1}{q}}\leq\frac{a}{p}+\frac{b}{q}$ for any $a,b\geq0$. Similarly, we can prove
$$|A(g,f)|\leq\frac{1}{q}\int_E|\nabla g|^pd\omega+\frac{1}{p}\int_E|\nabla f|^pd\omega.$$
This leads to
\begin{equation*}
\begin{aligned}
0\leq&\sum\limits_{i\in V}\mu_ik_i(f_i-g_i)^2\\
=&\sum\limits_{i\in V}(f_i-g_i)\sum\limits_{j\thicksim i}\omega_{ij}\left(|f_j-f_i|^{p-2}(f_j-f_i)-|g_j-g_i|^{p-2}(g_j-g_i)\right)\\
=&\sum\limits_{j\thicksim i}\omega_{ij}\big((f_i-g_i)-(f_j-g_j)\big)\left(|f_j-f_i|^{p-2}(f_j-f_i)-|g_j-g_i|^{p-2}(g_j-g_i)\right)\\
=&\sum\limits_{i\thicksim j}\omega_{ij}|f_j-f_i|^{p-2}(f_j-f_i)(g_j-g_i)+\sum\limits_{i\thicksim j}\omega_{ij}|g_j-g_i|^{p-2}(g_j-g_i)(f_j-f_i)\\
&-\Big(\sum\limits_{i\thicksim j}\omega_{ij}|f_j-f_i|^p+\sum\limits_{i\thicksim j}\omega_{ij}|g_j-g_i|^p\Big)\\
=&A(f,g)+A(g,f)-\left(\int_E|\nabla f|^pd\omega+\int_E|\nabla f|^pd\omega\right)\leq0,
\end{aligned}
\end{equation*}
which implies $f=g$.

\subsection{$L$ is on to}
For any given $f\in C(V)$, we need to show the equation $\Delta_pu-ku=f$ has a solution $u$. Denote
$$E(\varphi)=\frac{1}{p}\int_E|\nabla \varphi|^pd\omega+\frac{1}{2}\int_Vk\varphi^2d\mu+\int_Vf\varphi d\mu, \;\;\varphi\in C(V).$$
Consider the Euler-Lagrange equation of $E(\varphi)$. By calculation, we get
\begin{equation*}
\frac{d}{dt}\Big|_{t=0}E(\varphi+t\phi)=-\int_V(\Delta_p\varphi-k\varphi-f)\phi d\mu.
\end{equation*}
Hence $\nabla E(\varphi)=0$ if and only if $\Delta_p\varphi-k\varphi=f$. Next we show $E(\varphi)\rightarrow+\infty$ as $\|\varphi\|\rightarrow+\infty$, and hence $E(\varphi)$ attains its minimum in $C(V)$, which is a finite dimensional linear space.
Since $$\left|\int_Vf\varphi d\mu\right|\leq\|f\|_2\|\varphi\|_2=C_{f,G}\|\varphi\|_2,$$
then
\begin{equation*}
\begin{aligned}
E(\varphi)\geq& \frac{1}{p}\int_E|\nabla \varphi|^pd\omega+\frac{1}{2}\int_Vk\varphi^2d\mu-C_{f,G}\|\varphi\|_2\\
\geq& \frac{1}{2}\int_Vk\varphi^2d\mu-C_{f,G}\|\varphi\|_2\\
\geq&\frac{k_m}{2}\big(\|\varphi\|_2^2-C_{f,G}\|\varphi\|_2\big)\rightarrow+\infty
\end{aligned}
\end{equation*}
as $\|\varphi\|\rightarrow+\infty$. Suppose $E(\varphi)$ attains its minimum at $u\in C(V)$, then $\nabla E(u)=0$, which implies
$\Delta_pu-ku=f$.
\subsection{$-L^{-1}$ preserves order}
We first give a lemma, the proof of which is elementary.
\begin{lemma}\label{lem-a<b}
Assume $a,b\in\mathds{R}$, $a\leq b$ and $p>1$, then $|a|^{p-2}a\leq|b|^{p-2}b$.
\end{lemma}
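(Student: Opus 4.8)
The plan is to reduce the claim to the elementary monotonicity of the power function $x\mapsto x^{p-1}$ on $[0,+\infty)$, which holds because $p-1>0$. Writing $\phi(t)=|t|^{p-2}t$ (with the convention $\phi(0)=0$, consistent with $\lim_{t\to0}|t|^{p-2}t=0$), the assertion is exactly that $\phi$ is non-decreasing on $\mathds{R}$, so I would simply split into the three cases determined by the signs of $a$ and $b$.

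First, if $0\le a\le b$, then $\phi(a)=a^{p-1}\le b^{p-1}=\phi(b)$ by monotonicity of $x\mapsto x^{p-1}$. Second, if $a\le b\le 0$, then $0\le -b\le -a$, hence $(-b)^{p-1}\le(-a)^{p-1}$, and multiplying by $-1$ gives $\phi(a)=-(-a)^{p-1}\le-(-b)^{p-1}=\phi(b)$. Third, if $a\le 0\le b$, then $\phi(a)\le 0\le\phi(b)$ directly. Since $a\le b$, these cases are exhaustive, and the claim follows in each of them.

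Alternatively — and perhaps more slickly — I could note that $\phi(t)=\frac{d}{dt}\big(\tfrac1p|t|^p\big)$ and that $t\mapsto\tfrac1p|t|^p$ is convex on $\mathds{R}$, being the composition of the non-decreasing convex function $x\mapsto\tfrac1p x^p$ on $[0,+\infty)$ with the convex function $t\mapsto|t|$; the derivative of a convex function on $\mathds{R}$ is non-decreasing, which is precisely the assertion. I do not anticipate any genuine obstacle here: the only point deserving a word of care is the behaviour at $t=0$ when $1<p<2$, where $|t|^{p-2}$ is unbounded but $|t|^{p-2}t\to0$, so the convention $\phi(0)=0$ is the correct one — and the case analysis above already incorporates it.
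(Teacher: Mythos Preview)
Your proof is correct and follows exactly the same three-case split by signs that the paper uses; in fact, you supply the details the paper omits. The convexity remark is a nice alternative but not in the paper.
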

\begin{proof}
We can easily see that the conclusion above is valid under three complete cases, that is, $a\leq b\leq 0$, $a\leq0\leq b$ and $0\leq a\leq b$. We leave out the details.
\qed\\
\end{proof}

Next suppose $Lf\geq Lg$, we need to prove $f\leq g$. If not, then there is a vertex $i\in V$ such that
$$f_i-g_i=\max\limits_{j\in V}(f_j-g_j)>0.$$
Hence $f_i-g_i\geq f_j-g_j$ and then $f_j-f_i\leq g_j-g_i$. Then by Lemma \ref{lem-a<b}, we get
$$|f_j-f_i|^{p-2}(f_j-f_i)\leq|g_j-g_i|^{p-2}(g_j-g_i).$$
Since $Lf_i\geq Lg_i$, or say $\Delta_pf_i-k_if_i\geq\Delta_pg_i-k_ig_i$, we have
\begin{equation*}
\begin{aligned}
0<\mu_ik_i(f_i-g_i)\leq&\Delta_pf_i-\Delta_pg_i\\
=&\sum\limits_{j\thicksim i}\omega_{ij}\big(|f_j-f_i|^{p-2}(f_j-f_i)-|g_j-g_i|^{p-2}(g_j-g_i)\big)\leq0,
\end{aligned}
\end{equation*}
which is a contradiction. Thus $-Lf\leq-Lg$ implies $f\leq g$, and $-L$ preserves order.

\section{Proof of Theorem \ref{thm-bar-f-equation}}
\label{sect-prof-thm-bar-f-equation}
We can prove the uniqueness (up to a constant) of the equation $\Delta_pu=\overline{f}-f$ following similar process in subsection \ref{subsect-L-one-to-one}. The details are some what tedious and are omitted here. We mainly concentrate to the solvability of the equation $\Delta_pu=\overline{f}-f$ for any given $f\in C(V)$ in this subsection. If $f$ equals to a constant, then any constant function is a solution. Hence in the following we suppose $f$ is not a constant. Define an energy functional $F(\varphi)$ for any $\varphi\in C(V)$ as
$$F(\varphi)=\frac{1}{p}\int_E|\nabla \varphi|^pd\omega+\int_V(\overline{f}-f)\varphi d\mu.$$
Consider the Euler-Lagrange equation of $F(\varphi)$. For any $\phi\in C(V)$, by calculation, we get
\begin{equation*}
\frac{d}{dt}\Big|_{t=0}F(\varphi+t\phi)=\int_V(-\Delta_p\varphi+\overline{f}-f)\phi d\mu.
\end{equation*}
Hence $\nabla F(\varphi)=0$ if and only if $\Delta_p\varphi=\overline{f}-f$. Next we estimate $F(\varphi)$. By Lemma \ref{lem-p-poincare},
\begin{equation*}
\begin{aligned}
\left|\int_V(\overline{f}-f)(\varphi-\overline{\varphi})d\mu\right|\leq&
\left(\int_V|\overline{f}-f|^qd\mu\right)^{\frac{1}{q}}\left(\int_V|\varphi-\overline{\varphi}|^pd\mu\right)^{\frac{1}{p}}.\\
\leq&C_{p,f,G}\left(C_{p,G}\int_E|\nabla(\varphi-\overline{\varphi})|^pd\omega\right)^{\frac{1}{p}}\\
=&C_{p,f,G}\left(\int_E|\nabla\varphi|^pd\omega\right)^{\frac{1}{p}}.
\end{aligned}
\end{equation*}
Then it follows
\begin{equation*}
\begin{aligned}
F(\varphi)=&\frac{1}{p}\int_E|\nabla\varphi|^pd\omega+\int_V(\overline{f}-f)(\varphi-\overline{\varphi})d\mu+
\int_V(\overline{f}-f)\overline{\varphi}d\mu\\
=&\frac{1}{p}\int_E|\nabla\varphi|^pd\omega+\int_V(\overline{f}-f)(\varphi-\overline{\varphi})d\mu\\
\geq&\frac{1}{p}\int_E|\nabla\varphi|^pd\omega-C_{p,f,G}\left(\int_E|\nabla\varphi|^pd\omega\right)^\frac{1}{p}.
\end{aligned}
\end{equation*}
Since $p>1$, then for any positivie constant $a$, $x-ax^{\frac{1}{p}}>\frac{1}{2}x$ if $x>0$ is big enough (such as, $x>(2a)^{\frac{p}{p-1}}$). Using this fact and Lemma \ref{lem-p-poincare}, we know
$$F(\varphi)\geq\frac{1}{2p}\int_E|\nabla\varphi|^pd\omega\geq C_{p,G}\int_V|\varphi-\overline{\varphi}|^pd\mu$$
if $\int_V|\varphi-\overline{\varphi}|^pd\mu$ is big enough (such as, bigger than some constant $C_{p,f,g}$).

Look at the linear space $C(V)$ with inner product $\langle \varphi,\phi\rangle=\int_V\varphi\phi d\mu$. Let $\Lambda$ be the hyperplane orthogonal to the constant function $1\in C(V)$. Obviously $\overline{\varphi}=0$ for any $\varphi\in\Lambda$. Moreover, $\varphi-\overline{\varphi}\in\Lambda$ for any $\varphi\in C(V)$. Thus if $\varphi\in\Lambda$ and $\|\varphi\|\rightarrow+\infty$, then
$$\int_V|\varphi-\overline{\varphi}|^pd\mu=\int_V|\varphi|^pd\mu\rightarrow+\infty$$
and hence
$F(\varphi)\rightarrow+\infty$. This implies that $F|_{\Lambda}$ attains its minimum at some $u\in\Lambda$. Recall the Euler-Lagrange equation of $F$, we know for any $\phi\in\Lambda$:
$$\int_V(-\Delta_pu+\overline{f}-f)\phi d\mu=0.$$

Next we show the above formula is still valid for any $\phi\in C(V)$, hence $-\Delta_pu+\overline{f}-f=0$, and $u$ is a solution to the equation $\Delta_pu=\overline{f}-f$. In fact, for any $f\in C(V)$, it is easy to see $\int_V\Delta_pfd\mu=0$ and $\int_V(\overline{f}-f)d\mu=0$. For any $\phi\in C(V)$, note $\phi-\overline{\phi}\in \Lambda$, then
$$\int_V(-\Delta_pu+\overline{f}-f)(\phi-\overline{\phi})d\mu=0.$$
Since
$$\int_V(-\Delta_pu+\overline{f}-f)\overline{\phi}d\mu=\overline{\phi}\int_V(\overline{f}-f)d\mu-\overline{\phi}\int_V\Delta_pud\mu=0,$$
then it follows $$\int_V(-\Delta_pu+\overline{f}-f)\phi d\mu=0.$$

\section{Proof of Theorem \ref{thm-c=0}}
\label{sect-prof-thm-c=0}
\subsection{Necessary condition}
Assume $u$ is a solution to the equation $\Delta_pu=-he^u$. Then
$$\sum\limits_{i\thicksim j}\omega_{ij}|u_j-u_i|^{p-2}(u_j-u_i)=-\mu_ih_ie^{u_i}.$$
Using the following formula
$$\sum\limits_{i\in V}\sum\limits_{i\thicksim j}a_{ij}=\sum\limits_{i\thicksim j}(a_{ij}+a_{ji})$$
for any function $a$ defined on the directed edges, we see
$$0=\sum\limits_{i\in V}\sum\limits_{i\thicksim j}\omega_{ij}|u_j-u_i|^{p-2}(u_j-u_i)=-\sum\limits_{i\in V}\mu_ih_ie^{u_i},$$
which implies that $h$ changes sign. Moreover,
\begin{equation*}
\begin{aligned}
-\int_Vhd\mu=-\sum\limits_{i\in V}\mu_ih_i=&\sum\limits_{i\in V}e^{-u_i}\sum\limits_{i\thicksim j}\omega_{ij}|u_j-u_i|^{p-2}(u_j-u_i)\\
=&\sum\limits_{i\thicksim j}\omega_{ij}|u_j-u_i|^{p-2}(u_j-u_i)(e^{-u_i}-e^{-u_j})\geq 0.
\end{aligned}
\end{equation*}
Hence $\int_Vhd\mu\leq0$. If $\int_Vhd\mu=0$, then $u_j=u_i$ for each edge $i\thicksim j$. This implies that $u$ equals to a constant since the graph is connected. Then $0=\Delta_pu=-he^u$, which contradicts with $h\not\equiv0$.
\subsection{Sufficient condition}
Now assume $h$ changes sign and $\overline{h}<0$. Set
$$\mathcal{B}_1=\left\{f\in W^{1,\,p}(G):\int_Vfd\mu=0, \;\int_Vhe^fd\mu=0\right\}.$$
Consider the $p$-th Dirichlet energy functional
$$D(f)=\int_E|\nabla f|^pd\omega,$$
and let $$a=\inf\limits_{f\in\mathcal{B}_1}D(f)$$
Since $\mathcal{B}_1$ is not empty (this is proved in section 4 of \cite{GLY} by Grigor'yan-Lin-Yang), $a\in\mathds{R}$ is well defined. Choose $f_n\in\mathcal{B}_1$, such that $D(f_n)\rightarrow a$ as $n\rightarrow\infty$. By Lemma \ref{lem-p-poincare}, $f_n$ is bounded in $W^{1,\,p}(G)$. Then by Lemma \ref{lem-Sobolev-embedding}, there exists some $\hat{f}\in C(V)$ such that up to a subsequence, $f_n\rightarrow \hat{f}$
in $W^{1,\,p}(G)$. We may well denote this subsequence as $f_n$. It's easy to see $\hat{f}\in\mathcal{B}_1$ and $D(\hat{f})=a$. Based on the method of Lagrange multipliers, we calculate the Euler-Lagrange equation of $D(f)$ under constriant conditions as in $\mathcal{B}_1$ as follows. For any $\phi\in W^{1,\,p}(G)$, there holds
\begin{equation*}
\begin{aligned}
0=&\frac{d}{dt}\Big|_{t=0}\Big(D(\hat{f}+t\phi)-\lambda\int_Vhe^{\hat{f}+t\phi}d\mu-\gamma\int_V(\hat{f}+t\phi)d\mu\Big)\\
=&-p\int_V\Big(\Delta_p\hat{f}+\frac{\lambda}{p}he^{\hat{f}}+\frac{\gamma}{p}\Big)\phi d\mu.
\end{aligned}
\end{equation*}
Hence we get
\begin{equation}\label{euq-Euler-Lagr-c=0}
\Delta_p\hat{f}=-\frac{\lambda}{p}he^{\hat{f}}-\frac{\gamma}{p}.
\end{equation}
Integrating the above equation, we get $\gamma=0$. Next we show $\lambda>0$. By the equation (\ref{euq-Euler-Lagr-c=0}), we have
$\Delta_p\hat{f}_i=-\frac{\lambda}{p}he^{\hat{f}_i}$ and then
\begin{equation*}
\begin{aligned}
-\frac{\lambda}{p}\int_Vhd\mu=&-\frac{\lambda}{p}\sum\limits_{i\in V}\mu_ih_i\\
=&\sum\limits_{i\in V}e^{-\hat{f}_i}\sum\limits_{i\thicksim j}\omega_{ij}|\hat{f}_j-\hat{f}_i|^{p-2}(\hat{f}_j-\hat{f}_i)\\
=&\sum\limits_{i\thicksim j}\omega_{ij}|\hat{f}_j-\hat{f}_i|^{p-2}(\hat{f}_j-\hat{f}_i)(e^{-\hat{f}_i}-e^{-\hat{f}_j})\\
\geq& 0.
\end{aligned}
\end{equation*}
This implies $\lambda\geq0$. If $\lambda=0$, then $\Delta_p\hat{f}=0$ and hence $\hat{f}$ is a constant by Lemma \ref{lem-Liouville}. Note we have
already proved $\hat{f}\in\mathcal{B}_1$. From $\int_V\hat{f}d\mu=0$, we know $\hat{f}=0$. From $\int_Vhe^{\hat{f}}d\mu=0$, we further know $h\equiv0$. This contradicts with $h\not\equiv0$. Hence $\lambda>0$. Now set $$u=\hat{f}+\ln \frac{\lambda}{p},$$
then $u$ satisfies
$\Delta_pu=-he^{u}.$

\section{Proof of Theorem \ref{thm-c>0}}
\label{sect-prof-thm-c>0}
\subsection{Necessary condition}
Assume $u$ is a solution to the equation $\Delta_pu=c-he^u$ with $c>0$. Integrating this equation, we get
\begin{equation*}
\int_Vhe^u=c \mathrm{Vol}(G)>0.
\end{equation*}
Hence $h$ must be positive somewhere on $V$.
\subsection{Sufficient condition}
Now assume $h$ is positive somewhere on $V$. Set
$$\mathcal{B}_2=\left\{f\in W^{1,\,p}(G):\int_Vhe^fd\mu=c\mathrm{Vol}(G)\right\}.$$
$\mathcal{B}_2$ is not empty (this is proved in section 5 of \cite{GLY} by Grigor'yan-Lin-Yang). Consider the following energy functional
$$I(f)=\frac{1}{p}D(f)+c\int_Vfd\mu,\;f\in\mathcal{B}_2.$$
We shall prove $I(f)\geq-C_{p,h,G}$ for all $f\in\mathcal{B}_2$. For any $f\in\mathcal{B}_2$, by
$$\int_Vhe^{f-\overline{f}}d\mu=e^{-\overline{f}}\int_Vhe^{f}d\mu=e^{-\overline{f}}c\mathrm{Vol}(G)>0,$$
we can rewrite $I(f)$ as
$$I(f)=\frac{1}{p}D(f)-c\mathrm{Vol}(G)\ln\int_Vhe^{f-\overline{f}}d\mu+c\mathrm{Vol}(G)\ln(c\mathrm{Vol}(G)).$$
Set $g=(f-\overline{f})\big(D(f-\overline{f})\big)^{-\frac{1}{p}}=(f-\overline{f})D(f)^{-\frac{1}{p}}$, then $\int_Vgd\mu=0$ and $D(g)=1$. Then by
Lemma \ref{lem-Trudinger-Moser}, we get
$$\int_Ve^{\beta|g|^{\alpha}}d\mu\leq C_{\alpha,\beta,p,G}.$$
Using an elementary inequality
$$ab\leq\frac{\epsilon^p}{p}a^p+\frac{\epsilon^{-q}}{q}b^q,$$
we have for any $\epsilon>0$,
\begin{equation*}
\begin{aligned}
\int_Vhe^{f-\overline{f}}d\mu\leq\int_V|h|_Me^{|g|D(f)^{-\frac{1}{p}}}d\mu
\leq&|h|_M\int_Ve^{\frac{\epsilon^p}{p}D(f)+\frac{\epsilon^{-q}}{q}|g|^q}d\mu\\
=&|h|_Me^{\frac{\epsilon^p}{p}D(f)}\int_Ve^{\frac{\epsilon^{-q}}{q}|g|^q}d\mu\\
\leq&C_{\epsilon,p,h,G}e^{\frac{\epsilon^p}{p}D(f)}.
\end{aligned}
\end{equation*}
Thus
\begin{equation*}
\begin{aligned}
I(f)\geq&\frac{1}{p}D(f)-c\mathrm{Vol}(G)\ln\big(C_{\epsilon,p,h,G}e^{\frac{\epsilon^p}{p}D(f)}\big)+c\mathrm{Vol}(G)\ln(c\mathrm{Vol}(G))\\
\geq&\frac{1}{p}D(f)\big(1-c\mathrm{Vol}(G)\epsilon^p\big)-C_{\epsilon,p,h,G}.
\end{aligned}
\end{equation*}
Choosing $\epsilon=\big(2c\mathrm{Vol}(G)\big)^{-\frac{1}{p}}$, we obtain for all $f\in\mathcal{B}_2$,
\begin{equation}\label{equ-I}
I(f)\geq\frac{1}{2p}D(f)-C_{p,h,G}.
\end{equation}
Therefore,
$$b=\inf\limits_{f\in\mathcal{B}_2}I(f)\in \mathds{R}$$
is well defined. Choose $f_n\in\mathcal{B}_2$, such that $I(f_n)\rightarrow b$ as $n\rightarrow\infty$. We may well suppose
$$I(f_n)\leq b+1$$
for each $n$. Then by the estimate (\ref{equ-I}), we have
\begin{equation}\label{equ-D(f_n)}
D(f_n)\leq2p(b+1+C_{p,h,G})=C_{b,p,h,G}.
\end{equation}
Since $f_n\in\mathcal{B}_2$, we have
\begin{equation}\label{equ-|f_n|}
\begin{aligned}
\big|\overline{f_n}\big|=\mathrm{Vol}(G)\Big|\int_Vf_nd\mu\Big|=&\mathrm{Vol}(G)c^{-1}\Big|I(f_n)-\frac{1}{p}D(f_n)\Big|\\
\leq&(1+b)C_{p,h,G}=C_{b,p,h,G}.
\end{aligned}
\end{equation}
Since $\int_V(f_n-\overline{f_n})d\mu=0$, then by Lemma \ref{lem-p-poincare},
\begin{equation}\label{equ-(fn-bar-fn)}
\|f_n-\overline{f_n}\|_p^p\leq C_{p,G}\int_E\big|\nabla (f_n-\overline{f_n})\big|^pd\omega
=C_{p,G}\int_E\big|\nabla f_n\big|^pd\omega=C_{p,G}D(f_n).
\end{equation}
By the estimates (\ref{equ-|f_n|}) and (\ref{equ-(fn-bar-fn)}), we obtain
\begin{equation}\label{equ-(fn)_p}
\|f_n\|_p\leq\|f_n-\overline{f_n}\|_p+\|\overline{f_n}\|_p\leq C_{b,p,h,G}.
\end{equation}
Therefore $f_n$ is bounded in $W^{1,\,p}(G)$ by the estimates (\ref{equ-D(f_n)}) and (\ref{equ-(fn)_p}). By Lemma \ref{lem-Sobolev-embedding}, there exists some $\hat{f}\in C(V)$ such that up to a subsequence, $f_n\rightarrow \hat{f}$
in $W^{1,\,p}(G)$. We may well denote this subsequence as $f_n$. It's easy to see $\hat{f}\in\mathcal{B}_2$ and $I(\hat{f})=b$. We calculate the Euler-Lagrange equation of $I(f)$ under constraint conditions as in $\mathcal{B}_2$ as follows. For any $\phi\in W^{1,\,p}(G)$, there holds
\begin{equation*}
\begin{aligned}
0=&\frac{d}{dt}\Big|_{t=0}\Big\{I(\hat{f}+t\phi)-\lambda\Big(\int_Vhe^{\hat{f}+t\phi}d\mu-c\mathrm{Vol}(G)\Big)\Big\}\\
=&\int_V\big(-\Delta_p\hat{f}+c-\lambda he^{\hat{f}}\big)\phi d\mu.
\end{aligned}
\end{equation*}
Hence we get
\begin{equation}\label{euq-Euler-Lagr-c=0}
\Delta_p\hat{f}=c-\lambda he^{\hat{f}}.
\end{equation}
Integrating the above equation, we get $\lambda=1$. Then $\hat{f}$ is a solution to the equation (\ref{def-p-KW-eq}).

\section{Proof of Theorem \ref{thm-c<0}}
\label{sect-prof-thm-c<0}
\subsection{Necessary condition}
Assume $u$ is a solution to the equation $\Delta_pu=c-he^u$ with $c<0$. Then for each $i\in V$,
$$c\mu_i-\mu_ih_ie^{u_i}=\sum\limits_{j\thicksim i}\omega_{ij}|u_j-u_i|^{p-2}(u_j-u_i).$$
This leads to
\begin{equation*}
\begin{aligned}
c\sum\limits_{i\in V}\mu_ie^{-u_i}-\sum\limits_{i\in V}\mu_ih_i=
&\sum\limits_{i\in V}e^{-u_i}\sum\limits_{j\thicksim i}\omega_{ij}|u_j-u_i|^{p-2}(u_j-u_i)\\
=&\sum\limits_{j\thicksim i}\omega_{ij}|u_j-u_i|^{p-2}(u_j-u_i)(e^{-u_i}-e^{-u_j})\geq0.
\end{aligned}
\end{equation*}
Because $c\sum\limits_{i\in V}\mu_ie^{-u_i}<0$, so $\sum\limits_{i\in V}\mu_ih_i<0$, that is $\int_Vhd\mu<0$, and hence $\overline{h}<0$.
\subsection{Sufficient condition}
Now assume $\overline{h}<0$. We shall prove the Theorem \ref{thm-c>0} by using the method of upper and lower solutions. We call a function $u_-$ a lower solution to the equation (\ref{def-p-KW-eq}), if for all $i\in V$ there holds
$$\Delta_pu_--c+he^{u_-}\geq0.$$
Similarly,  $u_+$ is called an upper solution to the equation (\ref{def-p-KW-eq}), if for all $i\in V$ there holds
$$\Delta_pu_+-c+he^{u_+}\leq0.$$
\begin{lemma}\label{lem-exist-low-solution}
Consider the equation (\ref{def-p-KW-eq}) with $c<0$ and $\overline{h}<0$. Then for any given function $f\in C(V)$, there exists a lower solution $u_-$ to (\ref{def-p-KW-eq}) with $u_-\leq f$.
\end{lemma}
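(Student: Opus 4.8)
The plan is to produce an explicit \emph{constant} lower solution, which turns out to be all we need. Recall that $\Delta_p$ is translation invariant, so $\Delta_p A\equiv 0$ for every constant $A\in\mathds{R}$. Thus, testing $u_-\equiv A$ in the defining inequality, the requirement that $u_-$ be a lower solution of (\ref{def-p-KW-eq}) reads, at each vertex $i$,
\[
\Delta_p u_- - c + h_i e^{u_-}\;=\;-c+h_i e^{A}\;\ge\;0 .
\]

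First I would observe that since $c<0$ we have $-c>0$, so the inequality holds automatically for \emph{any} $A$ at every vertex with $h_i\ge 0$. At the (finitely many) vertices with $h_i<0$ it is equivalent to $e^{A}\le (-c)/(-h_i)$, i.e. $A\le\ln\!\big((-c)/(-h_i)\big)$. Since $V$ is finite, there is a number small enough to satisfy all of these constraints at once, and we may further require it to lie below $f_m=\min_{i\in V}f_i$; concretely one takes any
\[
A\;\le\;\min\Big\{\,f_m,\ \min_{\{i:\,h_i<0\}}\ln\tfrac{-c}{-h_i}\,\Big\}
\]
(with the second entry omitted if $h\ge 0$). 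For such $A$, the constant function $u_-\equiv A$ is a lower solution of (\ref{def-p-KW-eq}) with $u_-\le f$, which proves the lemma.

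I do not expect a real obstacle here: the argument is essentially sign bookkeeping --- tracking that $c<0$ and isolating the vertices where $h$ is negative --- together with the observation that on a finite vertex set a single sufficiently negative constant meets all the finitely many pointwise conditions simultaneously. It is perhaps worth flagging that this construction uses only $c<0$; the hypothesis $\overline{h}<0$ appearing in the statement is not needed to build the lower solution, and will instead come into play later, when one constructs a matching upper solution and invokes the method of upper and lower solutions.
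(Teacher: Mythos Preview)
Your proof is correct and follows essentially the same approach as the paper: both take $u_-$ to be a sufficiently negative constant, using that $\Delta_p$ annihilates constants so the inequality reduces to $-c+h_ie^{A}\ge 0$, which holds once $A$ is small enough. Your version is more explicit about the threshold and correctly observes that the hypothesis $\overline{h}<0$ plays no role in this lemma.
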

\begin{proof}
Note $\Delta_p(-A)-c+he^{-A}=-c+he^{-A}\geq0$ if the constant $A>0$ is big enough, hence $u_-=-A$ is a lower solution, which can be chosen smaller than any $f$.\qed
\end{proof}

\begin{lemma}\label{lem-exist-solu-c<0}
Consider the equation (\ref{def-p-KW-eq}) with $c<0$ and $\overline{h}<0$. Assume (\ref{def-p-KW-eq}) has a lower solution $u_-$ and an upper solution $u_+$, moreover, $u_-\leq u_+$. Then (\ref{def-p-KW-eq}) has a solution $u$ satisfying $u_-\leq u\leq u_+$.
\end{lemma}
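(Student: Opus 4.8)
The plan is to run the classical monotone iteration (method of upper and lower solutions), using Theorem~\ref{thm-L} to invert an auxiliary linear-in-the-lower-order-term operator in an order-preserving way. First I would fix a suitably large positive constant $k$. Setting $m=\min_{V}u_-$ and $M=\max_{V}u_+$, choose $k>|h|_M e^{M}$, so that for every $i\in V$ the scalar function $\Phi_i(t)=h_ie^{t}+kt$ has derivative $h_ie^{t}+k>0$ on $[m,M]$; thus each $\Phi_i$ is strictly increasing on $[m,M]$, whatever the sign of $h_i$. Since this $k\in C(V)$ is positive everywhere, $L=\Delta_p-k$ satisfies the hypotheses of Theorem~\ref{thm-L}, so $-L$ is a bijection of $C(V)$ and $(-L)^{-1}$ preserves order.

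Next I would introduce the fixed-point operator $T\colon C(V)\to C(V)$,
$$Tu=(-L)^{-1}\big(-c+he^{u}+ku\big),$$
and note that $Tu=u$ is equivalent to $-Lu=-c+he^{u}+ku$, i.e. to $\Delta_pu=c-he^{u}$; so solutions of (\ref{def-p-KW-eq}) are exactly the fixed points of $T$. Restricting attention to the order interval $[u_-,u_+]=\{u\in C(V):u_-\le u\le u_+\}$, I would establish two properties. First, \emph{monotonicity}: if $u_-\le u\le v\le u_+$, then at each vertex $u_i,v_i\in[m,M]$, so $-c+h_ie^{u_i}+ku_i\le -c+h_ie^{v_i}+kv_i$ by monotonicity of $\Phi_i$; applying the order-preserving map $(-L)^{-1}$ gives $Tu\le Tv$. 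Second, \emph{$T$ maps $[u_-,u_+]$ into itself}: the lower-solution inequality $\Delta_pu_-\ge c-he^{u_-}$ rearranges to $-Lu_-\le -c+he^{u_-}+ku_-$, whence $u_-\le Tu_-$ after applying $(-L)^{-1}$; symmetrically $Tu_+\le u_+$; and then for any $u\in[u_-,u_+]$ monotonicity gives $u_-\le Tu_-\le Tu\le Tu_+\le u_+$.

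With these in hand, set $u_0=u_-$ and $u_{n+1}=Tu_n$. By the self-mapping property every $u_n$ lies in $[u_-,u_+]$, and since $u_0=u_-\le Tu_-=u_1$, monotonicity and induction give $u_n\le u_{n+1}$ for all $n$. Hence on the finite set $V$ the sequence $\{u_n\}$ is, at each vertex, nondecreasing and bounded above by $u_+$, so it converges pointwise to some $u\in C(V)$ with $u_-\le u\le u_+$. Passing to the limit in $-Lu_{n+1}=-c+he^{u_n}+ku_n$, the right side tends to $-c+he^{u}+ku$ by continuity of $\exp$ and the left side tends to $-Lu$ by continuity of $\Delta_p$ and of multiplication by $k$ on the finite-dimensional space $C(V)$; therefore $-Lu=-c+he^{u}+ku$, i.e. $\Delta_pu=c-he^{u}$, which is the desired solution sandwiched between $u_-$ and $u_+$.

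The step needing the most care is the pairing of the two properties of $T$: one must choose $k$ large enough that each $\Phi_i$ is monotone on the \emph{entire} range $[\min_V u_-,\max_V u_+]$, and then rely on the self-mapping property to keep all iterates inside that range, so that monotonicity is only ever invoked where it actually holds. Everything else is routine or already supplied: invertibility and order preservation of $-L$ come from Theorem~\ref{thm-L}, monotone bounded convergence is trivial on a finite set, and the limit passage uses only continuity of $\Delta_p$ and $\exp$. (Note that the standing assumptions $c<0$ and $\overline h<0$ are not used in this argument; they enter only earlier, to guarantee that a lower solution $u_-$ and an upper solution $u_+$ with $u_-\le u_+$ exist.)
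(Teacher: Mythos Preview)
Your proof is correct and follows essentially the same monotone-iteration strategy as the paper, relying on Theorem~\ref{thm-L} for the order-preserving inverse of $L=\Delta_p-k$. The only cosmetic differences are that the paper takes $k=\max\{1,-h\}\,e^{u_+}$ (a function rather than your constant) and iterates \emph{downward} from $u_0=u_+$, whereas you iterate upward from $u_0=u_-$; neither change affects the substance of the argument.
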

\begin{proof}
Set $k_1=\max\{1,-h\}$ and $k=k_1e^{u_+}>0$. Set $u_0=u_+$, and for each $n\geq0$,
$$u_{n+1}=L^{-1}(c-he^{u_n}-ku_n).$$
By Theorem \ref{thm-L}, all $u_n\in C(V)$, $n\geq0$ are well defined. We claim that
$$u_-\leq u_{n+1}\leq u_n\leq\cdots\leq u_1\leq u_0=u_+.$$
To prove the above claim, we just need to prove
$$Lu_-\geq Lu_{n+1}\geq Lu_n\geq\cdots\geq Lu_1\geq Lu_0=Lu_+$$
by the ``preserve order" property of $L$ in Theorem \ref{thm-L}. We prove this by method of induction. It's easy to see $Lu_1-Lu_0=c-he^{u_+}-\Delta_pu_+\geq0$. By the mean value theorem,
\begin{equation*}
\begin{aligned}
Lu_--Lu_1=&(\Delta_pu_--ku_-)-(c-he^{u_+}-ku_+)\\
\geq&k(u_+-u_-)+h(e^{u_+}-e^{u_-})\\
=&k_1e^{u_+}(u_+-u_-)+he^\xi(u_+-u_-)\\
\geq&k_1(e^{u_+}-e^\xi)(u_+-u_-)\geq0,
\end{aligned}
\end{equation*}
where $u_-\leq\xi\leq u_+$. Hence we obtain $Lu_-\geq Lu_1\geq Lu_0=Lu_+$. Suppose
$$Lu_-\geq Lu_n\geq Lu_{n-1}\geq Lu_+$$
for some $n\geq1$. Then
\begin{equation*}
\begin{aligned}
Lu_{n+1}-Lu_n=&k_1e^{u_+}(u_{n-1}-u_n)+he^\xi(u_{n-1}-u_n)\\
\geq&k_1(e^{u_+}-e^\xi)(u_{n-1}-u_n)\geq0,
\end{aligned}
\end{equation*}
where $u_n\leq\xi\leq u_{n-1}\leq u_+$. Similarly,
\begin{equation*}
\begin{aligned}
Lu_{-}-Lu_{n+1}=&(\Delta_pu_--ku_-)-(c-he^{u_n}-ku_n)\\
\geq&k(u_n-u_-)+h(e^{u_n}-e^{u_-})\\
=&(k_1e^{u_+}+he^\xi)(u_n-u_-)\\
\geq&k_1(e^{u_+}-e^\xi)(u_n-u_-)\geq0,
\end{aligned}
\end{equation*}
where $u_-\leq\xi\leq u_{n}\leq u_+$. Hence we obtain $Lu_-\geq Lu_{n+1}\geq Lu_{n}\geq Lu_+$. By induction, we prove the claim above, which implies that there is a $u\in C(V)$, such that $u_n\rightarrow u$ as $n\rightarrow\infty$. Note
$$\Delta_pu_{n+1}-ku_{n+1}=Lu_{n+1}=c-he^{u_n}-ku_n,$$
then let $n\rightarrow\infty$, we get
$$\Delta_pu=c-he^{u}.$$
hence $u$ is a solution to the equation (\ref{def-p-KW-eq}).
\qed
\end{proof}

\begin{lemma}\label{lem-exist-upper-solution}
Consider the equation (\ref{def-p-KW-eq}) with $c<0$ and $\overline{h}<0$. Then there is a constant $C_h>0$, such that for any $c$ with $-C_h\leq c<0$, (\ref{def-p-KW-eq}) has an upper solution $u_+$.
\end{lemma}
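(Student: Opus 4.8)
The plan is to write down an explicit upper solution of the form $u_{+}=tv+(p-1)\ln t$, where $v\in C(V)$ is a fixed auxiliary function and $t=t(c)>0$ is chosen so that $t\to 0^{+}$ as $c\to 0^{-}$. First I would apply Theorem \ref{thm-bar-f-equation} with $f=h$ to obtain $v$ with $\Delta_{p}v=\overline{h}-h$. The point of this particular $v$ is that $\Delta_{p}v=\overline{h}-h\le\overline{h}<0$ at every vertex where $h\ge0$, that is, precisely where the term $he^{u_{+}}$ is nonnegative and therefore has to be absorbed by $\Delta_{p}u_{+}$.

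Next, for $c<0$ I would set $t=\big(2|c|/|\overline{h}|\big)^{1/(p-1)}$ and $u_{+}=tv+(p-1)\ln t$. Using $\Delta_{p}(tf+\mathrm{const})=t^{p-1}\Delta_{p}f$ (valid for $t>0$) together with $e^{u_{+}}=t^{p-1}e^{tv}$, a short computation gives
\[
\Delta_{p}u_{+}-c+he^{u_{+}}=t^{p-1}(\overline{h}-h)-c+t^{p-1}he^{tv}=t^{p-1}\overline{h}-c+t^{p-1}h\big(e^{tv}-1\big).
\]
Since $t^{p-1}=2|c|/|\overline{h}|$ and $\overline{h}<0$, we have $t^{p-1}\overline{h}=-2|c|=2c$, so the right-hand side equals $c+t^{p-1}h(e^{tv}-1)$, and $u_{+}$ is an upper solution if and only if
\[
t^{p-1}h_{i}\big(e^{tv_{i}}-1\big)\le|c|\qquad\text{for all }i\in V.
\]

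It then remains to check these pointwise inequalities. Where $h_{i}=0$ the inequality reads $0\le|c|$; where $h_{i}>0$ it is equivalent to $e^{tv_{i}}\le 1+|\overline{h}|/(2h_{i})$; and where $h_{i}<0$ it is equivalent to $e^{tv_{i}}\ge 1-|\overline{h}|/(2|h_{i}|)$, which is automatic once $|h_{i}|\le|\overline{h}|/2$. The bounds $1+|\overline{h}|/(2h_{i})>1$ and $1-|\overline{h}|/(2|h_{i}|)<1$ are fixed numbers depending only on $h$, whereas $t\to 0^{+}$ and hence $tv_{i}\to 0$ uniformly in $i\in V$ (the graph being finite) as $c\to 0^{-}$. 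Consequently there is a constant $C_{h}>0$, depending only on $h$ and $G$, such that for every $c$ with $-C_{h}\le c<0$ all the displayed inequalities hold, so $u_{+}$ is an upper solution.

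The only genuine obstacle here is arriving at the ansatz. The scaling $t$ of order $|c|^{1/(p-1)}$ is forced by the vertices where $h=0$, at which one needs $t^{p-1}|\overline{h}|\ge|c|$; the shift $(p-1)\ln t$ then normalises $e^{u_{+}}$ so that $he^{u_{+}}$ has size comparable to $|c|$; and the choice $\Delta_{p}v=\overline{h}-h$ is exactly what makes $\Delta_{p}u_{+}$ strictly negative on all of $\{h\ge0\}$. Once these choices are made, the verification is routine. I note in passing that when $h\le 0$ the set $\{h>0\}$ is empty, the upper restriction on $t$ coming from those vertices disappears, and the same construction in fact persists for all $c<0$; this is the germ of the proof of Theorem \ref{thm-c<0-h<0}.
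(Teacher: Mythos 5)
Your proof is correct and takes essentially the same route as the paper: the same auxiliary function $v$ solving $\Delta_p v=\overline{h}-h$ and the same ansatz $u_+=tv+(p-1)\ln t$, with the same key identity $\Delta_pu_+-c+he^{u_+}=t^{p-1}\overline{h}-c+t^{p-1}h(e^{tv}-1)$. The only difference is bookkeeping: the paper fixes the scale $a$ once, small enough that $|e^{av}-1|\le -\overline{h}/(2|h|_M)$, and sets $C_h=-\tfrac{\overline{h}}{2}a^{p-1}$, whereas you couple $t$ to $c$ via $t^{p-1}=2|c|/|\overline{h}|$ and let $c\to 0^-$; both verifications are equivalent.
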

\begin{proof}
By \ref{thm-bar-f-equation}, we can choose $v\in C(V)$, which is a solution to the equation $\Delta_pv=\overline{h}-h$.
Choose a constant $a>0$, such that
$$|e^{av}-1|\leq\frac{-\overline{h}}{2|h|_M}=\frac{-\overline{h}}{2\max\limits_{i\in V}|h_i|}.$$
Set $u_+=av+(p-1)\ln a$ and $C_h=\frac{-\overline{h}}{2}a^{p-1}$, then for any $c$ with $-C_h\leq c<0$, we obtain
\begin{equation*}
\begin{aligned}
\Delta_pu_+-c+he^{u_+}\leq&a^{p-1}h(e^{av}-1)+\frac{\overline{h}}{2}a^{p-1}\\
\leq&a^{p-1}|h|(e^{av}-1)+\frac{\overline{h}}{2}a^{p-1}\\
\leq&a^{p-1}|h|_M\frac{-\overline{h}}{2|h|_M}+\frac{\overline{h}}{2}a^{p-1}=0.
\end{aligned}
\end{equation*}\qed
\end{proof}

Now we define
\begin{equation}
c_-(h)=\inf\big\{c<0:\Delta_pu-c+he^u=0 \;\mathrm{has\;an\;upper\;solution}\big\}.
\end{equation}
By Lemma \ref{lem-exist-upper-solution}, $-\infty\leq c_-(h)<0$. For any $c$ with $c<c_-(h)$, (\ref{def-p-KW-eq}) has no upper solutions by the definition of $c_-(h)$. Since every solution is also an upper solution, (\ref{def-p-KW-eq}) has no solutions either for this $c$. For any $c$ with $c_-(h)<c<0$, it's easy to see that (\ref{def-p-KW-eq}) has at least an upper solution $u_+$. By Lemma \ref{lem-exist-low-solution}, we can find a lower solution $u_-$ to (\ref{def-p-KW-eq}) with $u_-\leq u_+$. Then by Lemma \ref{lem-exist-solu-c<0}, the equation (\ref{def-p-KW-eq}) has a solution for this $c$. Thus we finish the proof of Theorem \ref{thm-c<0}.

\section{Proof of Theorem \ref{thm-c<0-h<0}}
\label{sect-prof-thm-c<0-h<0}
Consider the equation (\ref{def-p-KW-eq}) with $c<0$, $h\leq0$ and $h\not\equiv0$. We need to find a solution to (\ref{def-p-KW-eq}).
By \ref{thm-bar-f-equation}, we can choose $v\in C(V)$, which is a solution to the equation $\Delta_pv=\overline{h}-h$.
Since $\overline{h}<0$ under the assumption of $h$, we can choose $a>0$, such that $a^{p-1}\overline{h}<c$. Fix any $b\in \mathds{R}$ with
$b>(p-1)\ln a-av$, and set $u_+=av+b$, then $e^{u_+}>a^{p-1}$. Hence
\begin{equation*}
\begin{aligned}
\Delta_pu_+-c+he^{u_+}=&a^{p-1}\overline{h}-c+he^{u_+}-ha^{p-1}\\
\leq&h(e^{u_{+}}-a^{p-1})\leq0,
\end{aligned}
\end{equation*}
which shows $u_{+}$ is an upper solution to (\ref{def-p-KW-eq}). By Lemma \ref{lem-exist-low-solution}, we can find a lower solution $u_-$ to (\ref{def-p-KW-eq}) with $u_-\leq u_+$. Then by Lemma \ref{lem-exist-solu-c<0}, the equation (\ref{def-p-KW-eq}) has a solution.\\

\noindent \textbf{Acknowledgements:} The author would like to thank Professor Gang Tian and Yanxun Chang for constant encouragement. The author would also like to thank Dr. Wenshuai Jiang, Xu Xu for many helpful conversations. The research is supported by National Natural Science Foundation of China under Grant No.11501027, and Fundamental Research Funds for the Central Universities (Nos. 2015JBM103, 2014RC028, 2016JBM071 and 2016JBZ012).

Huabin Ge: hbge@bjtu.edu.cn

Department of Mathematics, Beijing Jiaotong University, Beijing 100044, P.R. China
\end{document}